\numberwithin{equation}{section}
\theoremstyle{plain}
 \newtheorem{theorem}{Theorem}[section]
 \newtheorem{corollary}[theorem]{Corollary}
 \newtheorem{proposition}[theorem]{Proposition}
 \newtheorem{assumption}[theorem]{Assumption}
\theoremstyle{definition}
 \newtheorem{definition}[theorem]{Definition}
 \newtheorem{example}[theorem]{Example}
 \newtheorem{remark}[theorem]{Remark}
\newcommand{\bN}{\mathbb{N}}
\newcommand{\bZ}{\mathbb{Z}}
\newcommand{\bR}{\mathbb{R}}
\newcommand{\bC}{\mathbb{C}}
\newcommand{\supp}{\mbox{\rm supp }}
\newcommand{\bE}{\mathbb{E}}
\newcommand{\one}{\mathbf{1}}
\begin{document}
% \author{Alexander Lindner \and Lei Pan \and Ken-iti Sato}
% \title{On quasi-infinitely divisible distributions}
% \maketitle
%\begin{flushright}
%{\tiny To Alex on \today}
%\end{flushright}

\vspace{5mm}
\begin{center}
{\bf
{\large
L\'{e}vy driven CARMA generalized processes and stochastic partial differential equations}}

\vspace{5mm}

David Berger\\
\end{center}
\vspace{5mm}
We give a new definition of a L\'{e}vy driven CARMA random field, defining it as a generalized solution of a stochastic partial differential equation (SPDE). Furthermore, we give sufficient conditions for the existence of a mild solution of our SPDE. Our model finds a connection between all known definitions of CARMA random fields, and especially for dimension 1 we obtain the classical CARMA process. 
%\tableofcontents
\section{Introduction}
Autoregressive moving average (ARMA) processes are very well known processes in time series analysis. An ARMA$(p,q)$ process $(X_k)_{k\in\bZ}$, $p,q\in\bN_0$, is given by
\begin{align}\label{eqarma}
X_k-\sum\limits_{i=1}^p a_iX_{k-i}=W_k+\sum\limits_{j=1}^q b_j W_{k-j},
\end{align} 
where $a_1,\dotso,a_p,b_1,\dotso,b_q\in\bC$ are deterministic coefficients and $(W_k)_{k\in\bZ}$ is white noise or even an independent and identically distributed (iid) sequence of random variables. In short form we can also write
\begin{align*}
a(B)X_k=b(B)W_k,
\end{align*}
where $a(z)=1-\sum\limits_{i=1}^pa_i z^k$, $b(z)=1+\sum\limits_{j=1}^q b_j z^j$ are polynomials and $B$ is the shift operator defined by $B^l Y_k=Y_{k-l}$ for $l\in\bN$. ARMA$(p,q)$ processes were generalized in various ways and have many applications, e.g. in finance, astrophysics, engineering and traffic data, see [\ref{M}], [\ref{S}], [\ref{Zvaritch}] and [\ref{Klepsch}].\\
As the solution of $(\ref{eqarma})$ is a discrete process on a lattice, a possible way to generalize the concept is to study a continous version of $(\ref{eqarma})$, which is called continuous ARMA (CARMA) process. A CARMA$(p,q)$ process $(X_t)_{t\in\bR}$, where $p>q$, is given by
\begin{align}\label{eq1.1}
X_t=b'Y_t,\,t\in\bR,
\end{align}
where $Y=(Y_t)_{t\in\bR}$ is a $\bC^p$-valued process satisfying the stochastic differential equation
\begin{align}\label{eq1.2}
dY_t=AY_tdt+e_pdL_t
\end{align}
with
\begin{align*}
A=\begin{pmatrix}
0&1&0&\dotso&0\\
0&0&1&\dotso&0\\
\vdots&\vdots&\vdots&\ddots&\vdots\\
0&0&0&\dotso&1\\
-a_p&-a_{p-1}&-a_{p-2}&\dotso&-a_1
\end{pmatrix}, 
e_p=\begin{pmatrix}0\\
0\\
\vdots\\
0\\
1
\end{pmatrix}\in\mathbb{C}^p\textrm{ and }
b=\begin{pmatrix}b_0\\
b_1\\
\vdots\\
b_{p-2}\\
b_{p-1}
\end{pmatrix},
\end{align*}
where $a_1,\dotso,a_p,b_0,\dotso,b_{p-1}\in\bC$ are determinstic coefficients such that $b_q\neq 0$ and $b_j=0$ for every $j>q$, $b'$ denotes the transpose of $b$ and $L=(L_t)_{t\in\bR}$ is a two-sided L\'{e}vy process. The equations $(\ref{eq1.1})$ and $(\ref{eq1.2})$ are the so called state-space representation of the formal stochastic differential equation
\begin{align*}
a(D)Y_t=b(D)DL_t,
\end{align*} 
with $D$ the differential operator and $a(z)=z^p+a_1z^{p-1}+\dotso+a_p$ and $b(z)=b_0+b_1z+\dotso+b_qz^{q}$ are polynomials. In [\ref{Lindner}] necessary and sufficient conditions on $L$ and $A$ were given such that there exists a strictly stationary solution of $(\ref{eq1.1})$ and $(\ref{eq1.2})$, namely it was shown that it is sufficient and necessary that $\bE \log^+(|L_1|)<\infty$. CARMA processes have many applications, see [\ref{Garcia}] and [\ref{Brockwell3}].\\
As the CARMA process is defined on $\bR$, spatial problems cannot be easily transferred. As a consequence, there are some extensions of the CARMA process to the multidimensional setting. Lately, there were the two papers of Brockwell and Matsuda [\ref{Brockwell}] and Pham [\ref{Pham}], who introduce different concepts of CARMA processes in the multidimensional setting. In [\ref{Brockwell}] the new CARMA random field was given by
\begin{align}\label{eqbrockwell}
S_d(t):=\int\limits_{\bR^d} \sum\limits_{r=1}^p \frac{b(\lambda_r)}{a'(\lambda_r)}e^{\lambda_r\|t-u\|}dL(u),
\end{align}
where $dL$ denotes the integration over a L\'{e}vy bases, $a$ and $b$ are polynomials such that $a(z)=\prod_{i=1}^p(z^2-\lambda_i^2)$ and some further restrictions. The model has a well understood second order behaviour and can be used for statistical estimation. However, the authors do not deal with a dynamical description.\\
Pham [\ref{Pham}] follows another way and defines a CARMA random field $Y$ as a mild solution of the system of SPDEs given by
\begin{align}\label{eqpham}
Y(t)&=b'X(t),\,t\in\bR^d,\\
(I_p\partial_d-A_d)\cdots(I_p\partial_1-A_1)X(t)&=c\dot{L}(t),\,t\in\bR^d,
\end{align}
where $\dot{L}$ is a L\'{e}vy basis, $A_1,\dotso,A_d\in \bR^{p\times p}$ are matrices and $I_p$ is the identity matrix. Pham speaks of causal CARMA random fields, as the solution of the system (\ref{eqpham}) depends only on the past in the sense that the solution at point $x$ depends solely on the behavior of $\dot{L}$ on $(-\infty,x_1]\times\cdots\times (-\infty,x_d]$. So we can see directly that there is a big difference between these two definitions.\\
 The aim of this paper is to find a connection between these two models and give a generalized definition of CARMA random fields. Our starting point is the equation
\begin{align}\label{eqberger}
p(D)s=q(D)\dot{L},
\end{align}
where $p,q$ are polynomials in $d$ variables, $D$ denotes the differential operator and $\dot{L}$ denotes L\'{e}vy white noise. Our solution $s$ is defined as a generalized solution, see Section 3. We will start with an abstract analysis of this problem and prove for a far more general class then $(\ref{eqberger})$ the existence of a generalized solution under relatively mild conditions on the L\'{e}vy white noise. Our solution is similar to the definition of generalized CARMA$(p,q)$ process in [\ref{Hannig}] and as there, we do not assume that the degree of the polynomial $p$ is higher than the degree of the polynomial $q$. We will discuss two examples, which are related to the processes of Brockwell and Matsuda [\ref{Brockwell}] and Pham [\ref{Pham}]. We will also give certain conditions on $p$ and $q$ that guarantee that the obtained generalized solutions are random fields.\\
The above mentioned results can be found in Section \ref{section1} and Section \ref{section2}, where our main results are Theorem \ref{theorem1} and Theorem \ref{theoremcarma}. In Section \ref{section0} we recall some basic notation. In Section \ref{section1} we recall the definitions of L\'{e}vy white noise and generalized random processes. Moreover, we prove that a convolution operator with certain properties regarding his integrability defines a generalized random process and as an application we will study stochastic homogeneous elliptic partial differential equations. In Section \ref{section2} we use this theorem to show the existence of our CARMA generalized processes. Moreover, we study the concept of mild solutions in Section \ref{sectionCRF}, prove existence of mild CARMA random fields and show some connections between the mild and generalized solutions. In Section \ref{section3} we study the moment properties of our CARMA random fields and show that if the L\'{e}vy white noise has existing $\alpha$-moment for some $0<\alpha\le 2$, then the CARMA random field has also finite $\alpha$-moment, see Proposition \ref{propositionmoment}. In Section \ref{section8} we will study the connection between our model and the CARMA random field of Brockwell and Matsuda [\ref{Brockwell}]. 
\section{Notation and Preliminaries}\label{section0}
To fix notation, by $(\Omega,\mathcal{F})$ we denote a measurable space, where $\Omega$ is a set and $\mathcal{F}$ is a $\sigma$-algebra and by $L^0(\Omega,\mathcal{F},\mathbb{K})$ we denote all measurable functions $f:\Omega\to\mathbb{K}$ with respect to $\mathcal{F}$ where $\mathbb{K}=\bR, \bC$. In the case that $\mathcal{F}$ and $\mathbb{K}$ are clear from the context we set $L^0(\Omega)=L^0(\Omega,\mathcal{F},\mathbb{K})$. If we consider a probability space $(\Omega,\mathcal{F}, \mathcal{P})$, where $\mathcal{P}$ is a probability measure on $(\Omega,\mathcal{F})$, we say that a sequence $(f_n)_{n\in\bN}\subset L^0(\Omega)$ converges to $f$ in $L^0(\Omega)$ if $f_n$ converges in probability to $f$ with respect to the measure $\mathcal{P}$. In the case of $(\bR^d,\mathcal{B}(\bR^d))$ we denote by $\mathcal{B}(\bR^d)$ the Borel-$\sigma$-set on $\bR^d$. $\mathcal{B}_b(\bR^d)$ is the set of all Borel sets, which are bounded. \\
We write $\bN=\{1,2,\dotso\}$, $\bN_0=\bN\cup \{0\}$ and $\bZ,\,\bR,\,\bC$ for the set of integers, real numbers and complex numbers, respectively. If $z\in \bC$, we denote by $\Im z$ and $\Re z$ the imaginary and the real part of $z$. $\|\cdot\|$ denotes the Euclidean norm and $r^+:=\max\{0,r\}$ for every $r\in\bR$ . The indicator function of a set $A\subset \bR^d$, $d\in\bN$, is denoted by $\one_A$. By $L^p(\bR^d, A)$ for $A\subseteq \bC$ and $0<p\le \infty$ we denote the set of all Borel-measurable functions $f:\bR^d \to A$ such that $\int_{\bR^d} |f(x)|^p\,\lambda^d(dx)<\infty$ for $0<p<\infty$ and $\mathop{\textrm{ess sup}}_{x\in\bR^d}|f(x)|<\infty$ for $p=\infty$, where $\lambda^d$ is the $d-$dimensional Lebesgue measure. We denote by $||f||_{L^p}=\left(\int_\bR |f(x)|^p\,\lambda(dx)\right)^{1/p}$ for $0<p<\infty$ and $\|f\|_{L^\infty}=\mathop{\textrm{ess sup}}_{\bR^d}|f|$ the $L^p$-(quasi-)norm for a measurable function $f$. By $d_f$ we denote the distribution function of $f$, which means that 
\begin{align}
d_f(\alpha):=\lambda^d(\{x\in \bR^d: |f(x)|>\alpha\}),\, \alpha\ge 0.
\end{align}
We denote by $B_R(x)$ the set $\{y\in \bR^d: \|x-y\|<R\}$ and $x\wedge y:=\min\{x,y\}$ for two real numbers $x$ and $y$. For a set $A\subset \bR^d$ and an element $x\in\bR^d$ we set $dist(x,A):=\inf\{\|x-y\|:y\in A\}$. 
The space $\mathcal{D}(\bR^d)$ denotes the set of all infinitely differentiable functions $f:\bR^d\to\bR$ with compact support, where we denote the support of $f$ by $\supp f$. The topological dual space of $\mathcal{D}(\bR^d)$ will be denoted by $\mathcal{D}'(\bR^d)$, where an element  $u\in\mathcal{D}'(\bR^d)$ is called a distribution. We will write $\langle u,\varphi\rangle:=u(\varphi)$ for $\varphi\in\mathcal{D}(\bR^d)$. We say that a function $a:Y\to\bR$ from some function space $Y$ acts as a Fourier multiplier for some function space $X$ to a function space $R$ with well-defined Fourier transform $\mathcal{F}$ if $a:X\to R$ is defined by $a(u):=\mathcal{F}^{-1}(a´\mathcal{F}u)$, where $(a\mathcal{F}(u))(t)=a(t)\mathcal{F}(u)(t)$ such that the inverse Fourier transform $\mathcal{F}^{-1}$ is well-defined. For a function $f\in L^1(\bR^d,\bC^d)$ we set $\mathcal{F} f(x)=\int\limits_{\bR^d} e^{-i \langle z, x\rangle}f(z)\lambda^d(dz)$ and the $L^2$-Fourier transform likewise. Let $p(z)=\sum\limits_{|\alpha|\le m}p_{\alpha}z^{\alpha}$, $\alpha\in \bN^d_0$ and $z^{\alpha}=z_1^{\alpha_1}\dotso z_d^{\alpha_d}$, such that $p_{\beta}\neq 0$ for some $\beta$ with $|\beta|:=\beta_1+\dotso+\beta_d=m$. Then we define $\textrm{deg}(p):=m$, the defree of $p$. We set $D^{\alpha}=\partial_{x_1}^{\alpha_1}\dotso\partial_{x_d}^{\alpha_d}$ for $\alpha \in \bN^d_0$. We denote by $A^*$ the adjoint of the operator $A$.\\
We recall here the definition of a L\'{e}vy basis, as we explain some connection between a L\'{e}vy basis and generalized stochastic process, which will be defined later.

\begin{definition}[see {[\ref{Rajput}, p. 455]}]\label{randommeasures}
A \emph{L\'{e}vy basis} is family $(L(A))_{A\in\mathcal{B}_b({\bR^d})}$ of real valued random variables such that
\begin{itemize}
\item[i)]$L(\bigcup_{n=0}^\infty A_n)=\sum_{n=0}^\infty L(A_n)$ $a.s.$ for pairwise disjoint sets $(A_n)_{n\in\bN_0}\subset \mathcal{B}_b(\bR^d)$ with $\bigcup_{n\in\bN_0}A_n\in \mathcal{B}_b(\bR^d)$,
\item[ii)] $L(A_i)$ are independent for pairwise disjoint sets $A_1,\dotso,A_n\in \mathcal{B}_b(\bR^d)$ for every $n\in\bN$,
\item[iii)] there exist $a\in [0,\infty)$, $\gamma \in \bR$ and a L\'{e}vy measure $\nu$ on $\bR$ (i.e. a measure $\nu$ on $\bR$ such that $\nu(\{0\})=0$ and $\int\limits_{\bR} \min\{1,x^2\}\nu(dx)<\infty$) such that
\begin{align*}
\bE e^{iz L(A)}=\exp\left(\psi(z)\lambda^d(A)\right)
\end{align*}
for every $A\in \mathcal{B}_b(\bR^d)$, where
\begin{align*}
\psi(z):=i\gamma z-\frac{1}{2}az^2+\int\limits_{\bR} (e^{ixz}-1-ixz\one_{[-1,1]}(x))\nu(dx),\quad z\in \bR.
\end{align*}
The triplet $(a,\gamma, \nu)$ is called the \emph{characteristic triplet} of $L$ and $\psi$ its \emph{characteristic exponent}. By the L\'{e}vy-Khintchine formula, $L(A)$ is then infinitely divisible. 
\end{itemize}
\end{definition}
\vspace{5mm}

\section{SPDEs and generalized solutions}\label{section1}
\subsection{The concept of generalized solutions}
This section deals with L\'{e}vy white noise and the definition of solutions of the SPDEs given in (\ref{eqberger}). We will prove a multiplier theorem for general L\'{e}vy white noise and use this theorem to prove the existence of our CARMA random process. We will follow mainly [\ref{Fageot}, Section 2].\\
As already mentioned, we denote by $\mathcal{D}(\bR^d)$ the space of infinitely differentiable functions with compact support, where we assume that the space is equipped with the usual topology, i.e. we say that a sequence $(\varphi_n)_{n\in\bN}\subset \mathcal{D}(\bR^d)$ converges to $\varphi$ in $\mathcal{D}(\bR^d)$ if there exists a compact subset $K\subset \bR^d$ such that $\supp\varphi_n,\supp \varphi\subset K$ for every $n\in\bN$ and $\sup_{x\in\bR^d}|D^{\alpha}(\varphi_n(x)-\varphi(x))|\to 0$ for $n\to\infty$ for every multiindex $\alpha\in\bN^d_0$.\\
Let $(\Omega,\mathcal{F},\mathcal{P})$ be a probability space. We recall the definition of a generalized random process.
\begin{definition}[see {[\ref{Fageot}, Definition 2.1]}]
A generalized random process is a linear and continuous function $s:\mathcal{D}(\bR^d)\to L^0(\Omega)$. The linearity means that, for every $\varphi_1,\varphi_2\in\mathcal{D}(\bR^d)$ and $\gamma\in\bR$,
$$s(\varphi_1+\gamma \varphi_2)=s(\varphi_1)+\gamma s(\varphi_2) \textrm{ almost surely}. $$
The continuity means that if $\varphi_n\to\varphi$ in $\mathcal{D}(\bR^d)$, then $s(\varphi_n)\to s(\varphi)$ in $L^0(\Omega)$. 
\end{definition}
As shown in [\ref{Walsh}, Corollary 4.2], there exists a measurable version from $(\Omega,\mathcal{F})$ to $(\mathcal{D}'(\bR^d),\mathcal{C})$ with respect to the cylindrical $\sigma$-field $\mathcal{C}$ generated by the sets
\begin{align*}
\{u\in\mathcal{D}'(\bR^d)|\,(\langle u,\varphi_1\rangle,\dotso,\langle u,\varphi_N\rangle)\in B\}
\end{align*}
with $N\in \bN$, $\varphi_1,\dotso, \varphi_N\in \mathcal{D}(\bR^d)$ and $B\in\mathcal{B}(\bR^N)$. From now on we will always work with such a version.\\
The probability law of a generalized random process $s$ is given by
\begin{align*}
\mathcal{P}_s(B):=\mathcal{P}(s\in B)
\end{align*}
for $B\in\mathcal{C}$. The characteristic functional $\widehat{\mathcal{P}}_s$ is then defined by
\begin{align*}
\widehat{\mathcal{P}}_s(\varphi)=\int\limits_{\mathcal{D}'(\bR^d)}\exp(i\langle u,\varphi\rangle)d\mathcal{P}_s(u), \, \varphi\in \mathcal{D}(\bR^d).
\end{align*}
We will work with L\'{e}vy white noise, which is a generalized random process where the characteristic functional satisfies a L\'{e}vy-Khintchine representation.
\begin{definition}
A L\'{e}vy white noise $\dot{L}$ is a generalized random process, where the characteristic functional is given by
\begin{align*}
\widehat{\mathcal{P}}_{\dot L}(\varphi)=\exp\left(\,\,\int\limits_{\bR^d} \psi(\varphi(x))\lambda^d(dx)\right)
\end{align*}
for every $\varphi \in \mathcal{D}(\bR^d)$, where $\psi:\bR\to \bC$ is given by
\begin{align*}
\psi(z)=i\gamma z-\frac{1}{2}az^2+\int\limits_{\bR}(e^{ixz}-1-ixz\one_{|x|\le1})\nu(dx)
\end{align*}
where $a\in\bR^+$, $\gamma\in\bR$ and $\nu$ is a L\'{e}vy-measure, i.e. a measure such that $\nu(\{0\})=0$ and $$\int\limits_{\bR} \min(1,x^2)\nu(dx)<\infty.$$ We say that $\dot{L}$ has the characteristic triplet $(a,\gamma,\nu)$.
\end{definition}
The existence of the L\'{e}vy-white noise was proven in [\ref{Gelfand}]. The domain of the L\'{e}vy white noise can also be extended to indicator functions $\one_{A}$ for $A$ be a Borel set with finite Lebesgue measure by using the construction in [\ref{Fageot}, Proposition 3.4]. For a more general function $f$ we say that $f$ is in the domain $\dot{L}$ if there exists a sequence of elementary functions $f_n$ converging almost everywhere to $f$ such that $\langle\dot{L},f_n\one_{A}\rangle$ convergens in probability for $n\to\infty$ for every Borel set $A$ and set $\langle \dot{L},f\rangle$ as the limit in probability of $\langle\dot{L},f_n\rangle$ for $n\to\infty$, where for a elementary function $f:=\sum_{j=1}^m a_j \one_{A_j}$ $\langle\dot{L},f\rangle$ is defined by $\sum_{j=1}^ma_j\langle\dot{L}, \one_{A_j}\rangle$, see also [\ref{Fageot}, Definition 3.6]. For the maximal domain of the L\'{e}vy white noise $\dot{L}$ we write $L(\dot{L})$. By setting $L(A):=\langle \dot{L}, \one_A\rangle$ for bounded Borel sets $A$, the extention of a L\'{e}vy white noise $\dot{L}$ can be identified with a L\'{e}vy basis $L$ in the sense of Rajput and Rosinski [\ref{Rajput}], see [\ref{Fageot}, Theorem 3.5 and Theorem 3.7]. As a L\'{e}vy basis can be identified with a L\'{e}vy white noise in a canonical way, i.e. $\langle \dot{L},\varphi\rangle:=\int\limits_{\bR^d} \varphi(x)dL(x)$ for $\varphi \in\mathcal{D}(\bR^d)$, we do not differ between a L\'{e}vy basis and L\'{e}vy-white noise. In particular, a Borel-measurable function $f:\bR^d\to \bR$ is in $L(\dot{L})$ if and only if $f$ is integrable with respect to the L\'{e}vy basis $L$ in the sense of Rajput and Rosinski [\ref{Rajput}], see [\ref{Fageot}, Def. 3.6].\\
The L\'{e}vy white noise is stationary in the following sense.
\begin{definition}
A generalized process $s$ is called stationary if for every $t\in\bR^d$, $s(\cdot+t)$ has the same law as $s$. Here, $s(\cdot+t)$ is defined by
$$\langle s(\cdot+t),\varphi\rangle:=\langle s, \varphi(\cdot-t)\rangle \textrm{ for every }\varphi\in\mathcal{D}(\bR^d).$$
\end{definition}

%Our goal is to show that certain SPDEs have a generalized solution. We assume that our solution can be written as a composition of a continuous operator on $\mathcal{D}(\bR^d)$ to $\mathcal{D}(\bR^d)$ and a convolution operator continuous from $\mathcal{D}(\bR^d)$ to $L(\dot{L})$. Then the following Theorem asserts that we obtain a generalized solution. 
%\begin{theorem}[see {[\ref{Fageot}, Theorem 6.1]}]\label{theoremirgendwas}
%We consider a L\'{e}vy white noise $\dot{L}$. We assume that $T$ is a continuous and linear operator from $\mathcal{D}(\bR^d)\to L(\dot{L})$. Then the mapping
%\begin{align*}
%s:\mathcal{D}(\bR^d)&\to L^0(\Omega)\\
%\varphi&\mapsto \langle s,\varphi\rangle :=\langle \dot{L},T\varphi\rangle
%\end{align*}
%speciefies a generalized random process.\\
%If moreover there exists an operator $A$ such that $TA^*\varphi=\varphi$ for every $\varphi\in\mathcal{D}(\bR^d)$ (left-inverse property), then we have that
%\begin{align*}
%As=\dot{L},
%\end{align*} 
%which means that
%\begin{align*}
%\langle s, A^* \varphi\rangle=\langle \dot{L},\varphi\rangle\textrm{ a.s. for all }\varphi\in\mathcal{D}(\bR^d),
%\end{align*}
%where $A^*$ is the adjoint of $A$.
%\end{theorem}
%We call $s$ a CARMA$(p,q)$ generalized random function if there exists a Polynomial $P:\bR^n\to \bR$ of order $p$ and an polynomial $Q:\bR^n \to\bR$ of order $q$ such that such that $P(\nabla)s=Q(\nabla)\dot{L}$, which we understand as $\langle s, P(\nabla)^* \varphi\rangle=\langle \dot{L},Q(\nabla)^* \varphi   \rangle$.\\
\subsection{Generalized stochastic processes constructed from L\'{e}vy white noise}
We now state and prove our first theorem which asserts that a large class of SPDEs has a generalized solution by only assuming low moment conditions on the L\'{e}vy white noise. 
\begin{theorem}\label{theorem1} 
Let $\dot{L}$ be a L\'{e}vy white noise with characteristic triplet $(a,\gamma,\nu)$ and $G:\bR^d\to\bR$ be a measurable function such that $G\in L^1(\bR^d)$. Define 
\begin{align}\label{eq201}G_R(x):=\int\limits_{B_R(x)}|G(y)|\lambda^d(dy)\end {align} for every $x\in\bR^d$ and $R>0$ and 
\begin{align}\label{wiesonur}
h_R(x)=x \int\limits_0^{1/x} d_{G_R}(\alpha)\lambda^1(d\alpha)\textrm{ for }x>0.
\end{align} 
Assume that 
\begin{align}\label{ass2}
\int\limits_{\bR} \one_{|r|>1} h_R(|r|)\nu(dr)<\infty
\end{align} 
for every $R>0$. Then 
\begin{align}\label{farid}
s(\varphi):=\langle\dot{L},G\ast \varphi\rangle, \quad \varphi\in \mathcal{D}(\bR^d)
\end{align}
 defines a stationary generalized random process.
\end{theorem}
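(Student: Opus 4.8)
The plan is to establish the three requirements of a generalized random process—well-definedness of $s(\varphi)$, linearity, and continuity—together with stationarity. The pivot is the fact recalled in the excerpt that $f\in L(\dot L)$ exactly when $f$ is integrable against the L\'{e}vy basis $L$ in the sense of Rajput and Rosinski, i.e. when, with $\tau(r)=r\one_{|r|\le1}$,
\begin{align*}
\int_{\bR^d}\Big|\gamma f(x)+\int_\bR\big(\tau(f(x)r)-f(x)\tau(r)\big)\,\nu(dr)\Big|\,\lambda^d(dx)&<\infty,\\
a\int_{\bR^d}|f(x)|^2\,\lambda^d(dx)&<\infty,\\
\int_{\bR^d}\int_\bR\min(1,|f(x)r|^2)\,\nu(dr)\,\lambda^d(dx)&<\infty.
\end{align*}
I would apply this with $f=G\ast\varphi$. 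The single most useful observation is the pointwise domination: if $\supp\varphi\subseteq B_R(0)$, then by H\"{o}lder's inequality $|(G\ast\varphi)(x)|\le\|\varphi\|_{L^\infty}\int_{B_R(x)}|G(y)|\,\lambda^d(dy)=\|\varphi\|_{L^\infty}G_R(x)$, so every pointwise quantity in $G\ast\varphi$ is controlled by the corresponding one in $G_R$.

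The heart of the argument is the identity
\begin{align*}
\int_{\bR^d}\min(1,t\,G_R(x))\,\lambda^d(dx)=h_R(t),\qquad t>0,
\end{align*}
which I would prove by splitting $\bR^d$ into $\{G_R>1/t\}$ and $\{G_R\le1/t\}$ and applying the layer-cake representation $G_R(x)=\int_0^\infty\one_{\{G_R(x)>\alpha\}}\,d\alpha$ together with Fubini; the two boundary terms $d_{G_R}(1/t)$ cancel and one is left precisely with $t\int_0^{1/t}d_{G_R}(\alpha)\,d\alpha$, i.e. (\ref{wiesonur}). I would also record that $t\mapsto h_R(t)/t=\int_0^{1/t}d_{G_R}(\alpha)\,d\alpha$ is non-increasing, whence $h_R(at)\le a\,h_R(t)$ for $a\ge1$.

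With these in hand I would verify the three conditions. Young's inequality yields $G\ast\varphi\in L^1\cap L^2$ and $G_R\in L^1\cap L^2$, all norms bounded through $\|G\|_{L^1}$; this settles the Gaussian condition (it equals $a\|G\ast\varphi\|_{L^2}^2$) and, using $\min(1,w^2)\le w^2$, $|w|\one_{|w|>1}\le w^2$, and the fact that $\nu$ is a L\'{e}vy measure, the $|r|\le1$ parts of the other two integrals (each bounded by $\|G\ast\varphi\|_{L^2}^2\int_{|r|\le1}r^2\,\nu(dr)$). For the $|r|>1$ parts I would use $\min(1,w^2)\le\min(1,|w|)$ and $|w|\one_{|w|\le1}\le\min(1,|w|)$ to dominate both integrands by $\min(1,c\,G_R(x)\,|r|)$ with $c=\max(\|\varphi\|_{L^\infty},1)$; Fubini and the key identity then turn the $x$-integral into $h_R(c|r|)\le c\,h_R(|r|)$, so the whole contribution is at most $c\int_{|r|>1}h_R(|r|)\,\nu(dr)<\infty$ by (\ref{ass2}). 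Hence $G\ast\varphi\in L(\dot L)$, so $s(\varphi)$ is well defined, and linearity is immediate from linearity of $\varphi\mapsto G\ast\varphi$ and of $\langle\dot L,\cdot\rangle$.

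For continuity I would take $\varphi_n\to\varphi$ in $\mathcal D(\bR^d)$, set $\psi_n=\varphi_n-\varphi$, and show $s(\psi_n)=\langle\dot L,G\ast\psi_n\rangle\to0$ in probability; as the limit is constant this is equivalent to $\int_{\bR^d}\psi(\theta(G\ast\psi_n)(x))\,\lambda^d(dx)\to0$ for every $\theta\in\bR$. Here all $\psi_n$ share a support $B_R(0)$ with $\|\psi_n\|_{L^\infty}\to0$, so $(G\ast\psi_n)(x)\to0$ for a.e.\ $x$ while $|(G\ast\psi_n)(x)|\le cG_R(x)$ uniformly in $n$; combining the domination with the standard bound $|\psi(z)|\le|\gamma||z|+\tfrac12\big(a+\int_{|r|\le1}r^2\nu(dr)\big)z^2+2\int_{|r|>1}\min(1,|rz|)\,\nu(dr)$ produces an $n$-independent majorant whose $x$-integral is finite by exactly the $G_R$, $G_R^2$ and $h_R$ estimates of the previous step, so dominated convergence applies. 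Finally, stationarity follows because translation commutes with convolution, $(G\ast\varphi)(\cdot-t)=G\ast(\varphi(\cdot-t))$, and the law of $\dot L$ is translation invariant (Lebesgue measure is), whence for all $\varphi_1,\dots,\varphi_N$ and $\theta_1,\dots,\theta_N$ the joint characteristic functional $\exp\big(\int_{\bR^d}\psi(\sum_j\theta_j(G\ast\varphi_j)(x-t))\,\lambda^d(dx)\big)$ does not depend on $t$. The main obstacle is the key identity together with the verification that (\ref{ass2}) captures precisely the large-jump ($|r|>1$) contributions; once that is secured, the remaining steps are routine applications of Young's inequality and standard L\'{e}vy--Khintchine estimates.
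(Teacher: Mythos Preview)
Your proof is correct and follows essentially the same route as the paper: both arguments rest on the Rajput--Rosinski integrability criterion, the pointwise domination $|(G\ast\varphi)(x)|\le\|\varphi\|_{L^\infty}G_R(x)$, and the control of the large-jump part through $d_{G_R}$ and $h_R$. Your packaging via the identity $\int_{\bR^d}\min(1,tG_R(x))\,\lambda^d(dx)=h_R(t)$ streamlines several of the paper's case distinctions, and your continuity argument via characteristic functionals (rather than showing the three Rajput--Rosinski quantities tend to zero individually, as the paper does) is a minor but legitimate variation.
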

Observe that although $\varphi \in\mathcal{D}(\bR^d)$, $G\ast\varphi$ is in general not in $\mathcal{D}(\bR^d)$ unless $G$ has compact support. The point is that nevertheless, $s$ defined by $(\ref{farid})$ gives a generalized random process. Sufficient conditions for $(\ref{ass2})$ to hold will be treated in Example \ref{ichhabekeineahnung}.
\begin{proof}
We need to show that $G\ast \varphi \in L(\dot{L})$ and $\langle \dot{L}, G\ast\varphi_n\rangle\to \langle \dot{L}, G\ast\varphi\rangle$ as $n\to\infty$ in $L^0(\Omega)$ for a sequence $(\varphi_n)_{n\in \bN}$ converging to $\varphi$ in $\mathcal{D}(\bR^d)$. As $\langle \dot{L},G\ast \cdot \rangle$ is linear, this is equivalent to check that $\langle \dot{L}, G\ast(\varphi_n-\varphi)\rangle\to 0$ as $n\to\infty$ in $L^0(\Omega)$, which is implied by
\begin{align}
&\label{ha}\int\limits_{\bR^d} \left|\gamma \varphi_n\ast G(x)+ \int\limits_{\bR} r (\varphi_n\ast G)(x)(\one_{|r(\varphi_n\ast G)(x)|\le 1}-\one_{|r|\le 1})\nu(dr)\right|\lambda^d(dx)\to 0,\\
&\label{haha}\int\limits_{\bR^d}  \int\limits_{\bR} 1\wedge (r (\varphi_n\ast G)(x))^2\nu(dr)\lambda^d(dx)\to 0\textrm{ and }\\
 \label{hahaha}a^2&\int\limits_{\bR^d}|G\ast \varphi_n(x)|^2\lambda^d(dx)\to 0
\end{align}
for $n\to\infty$ if $\varphi_n\to 0$ for $n\to \infty$ in $\mathcal{D}(\bR^d)$, see [\ref{Rajput}, Theorem 2.7] (that $G\ast\varphi \in L(\dot{L})$ follows if the above quantities are finite).\\
Since $G\in L^1(\bR^d)$ it is easily seen that $$\int\limits_{\bR^d} \left|\gamma \varphi_n\ast G(x)\right|\lambda^d(dx)\le |\gamma|\, ||\varphi_n||_{L^1}||G||_{L^1}\to 0$$for $n\to \infty$. The other term in (\ref{ha}) will be splitted by
\begin{align*}
&\int\limits_{\bR^d} \int\limits_{\bR}|r(\varphi_n \ast G)(x)|\cdot|\one_{|r(\varphi_n\ast G)(x)|\le 1}-\one_{|r|\le 1}|\nu(dr)\lambda^d(dx)\\
=&\int\limits_{\bR^d} \int\limits_{\bR}|r(\varphi_n \ast G)(x)|\one_{|r(\varphi_n\ast G)(x)|\le 1,|r|>1}\nu(dr)\lambda^d(dx)+\int\limits_{\bR^d} \int\limits_{\bR}|r(\varphi_n \ast G)(x)|\one_{|r(\varphi_n\ast G)(x)|> 1,|r|\le1}\nu(dr)\lambda^d(dx)\\
=& \int\limits_{\bR}|r|\one_{|r|>1} \int\limits_{\bR^d} |(\varphi_n\ast G)(x)|\one_{|(\varphi_n\ast G)(x)|\le \frac{1}{|r|}}\lambda^d(dx)\nu(dr)\\
+& \int\limits_{\bR}|r|\one_{|r|\le 1} \int\limits_{\bR^d} |(\varphi_n\ast G)(x)|\one_{|(\varphi_n\ast G)(x)|> \frac{1}{|r|}}\lambda^d(dx)\nu(dr).
\end{align*}
Let us give a pointwise upper bound for the convolution. Let $R>0$ be such that $\supp(\varphi_n)\subset B_r(0)$ for some $r<R$. We then see that for every $x\in\bR^d$
\begin{align*}
(\varphi_n\ast G)(x)=\int\limits_{\bR^d} G(y)\varphi_n(x-y) \lambda^d(dy)= \int\limits_{B_R(x)} G(y)\varphi_n(x-y)\lambda^d(dy)\le G_R(x) ||\varphi_n||_{\infty}.
\end{align*}
We then obtain
\begin{align}
\nonumber d_{\varphi_n\ast G}(\alpha)=&\lambda^d\left( \{x\in\bR^d: |\varphi_n\ast G(x)|> \alpha \}\right)\\
\le& \lambda^d\left(\{ x\in\bR^d: |G_R(x)|> \alpha/||\varphi_n||_{\infty\}} \right)=d_{G_R}(\alpha/||\varphi_n||_{\infty}).\label{wichtig}
\end{align}
So we see by [\ref{Grafakos}, Exercise 1.1.10, p. 14] that
\begin{align*}
\int\limits_{\bR^d} |(\varphi_n\ast G)(x)|\one_{|(\varphi_n\ast G)(x)|
\le \frac{1}{|r|}}\lambda^d(dx)
\le&\int\limits_{0}^{\frac{1}{|r|}} d_{\varphi_n\ast G}(\alpha)\lambda^1(d\alpha)\le& \int\limits_{0}^{\frac{1}{|r|}}d_{G_R}(\alpha/||\varphi_n||_{\infty})\lambda^1(d\alpha).
\end{align*}
We see that the right hand side converges to $0$ for $n\to\infty$ and for $n$ large enough we have
\begin{align*}
\int\limits_{0}^{\frac{1}{|r|}}d_{G_R}\left(\frac{\alpha}{||\varphi_n||_{\infty}}\right)\lambda^1(d\alpha)\le \int\limits_{0}^{\frac{1}{|r|}}d_{G_R}\left(\alpha\right)\lambda^1(d\alpha)=\frac{1}{|r|}h_R(|r|).
\end{align*}
Lebesgue's dominated convergence theorem  using (\ref{ass2}) implies
\begin{align*}
 &\int\limits_{\bR}|r|\one_{|r|>1} \int\limits_{\bR^d} |(\varphi_n\ast G)(x)|\one_{|(\varphi_n\ast G)(x)|\le \frac{1}{|r|}}\lambda^d(dx)\nu(dr)\to 0
\end{align*}
for $n\to\infty$. \\
For the other term we see from Young's inequality that
\begin{align*}
\int\limits_{\bR^d} |(\varphi_n\ast G)(x)|\one_{|(\varphi_n\ast G)(x)|> \frac{1}{|r|}}\lambda^d(dx)\le |r|\cdot||\varphi_n\ast G||^2_{L^2(\bR^d)}\le |r|  \|G\|_{L^1(\bR^d)}^2\|\varphi_n\|_{L^2(\bR^d)}^2
\end{align*}
and again from Lebesgue's dominated convergence theorem (since $\int_{|r|\le 1} r^2\nu(dr)<\infty$) 
\begin{align*}
\int\limits_{\bR}|r|\one_{|r|\le 1} \int\limits_{\bR^d} |(\varphi_n\ast G)(x)|\one_{|(\varphi_n\ast G)(x)|> \frac{1}{|r|}}\lambda^d(dx)\nu(dr)\to 0
\end{align*}
for $n\to\infty$. This gives (\ref{ha}).\\
Now we check (\ref{haha}). We first note that
\begin{align*}
1\wedge (r^2 (\varphi_n\ast G)(x)^2)\le& \one_{|r(\varphi_n\ast G)(x)|>1}\one_{|r|>1}+|\varphi_n\ast G(x)| |r|\one_{|r(\varphi_n\ast G)(x)|>1}\one_{|r|\le 1}\\
&+(\varphi_n\ast G(x)r)^2 \one_{|r(\varphi_n\ast G)(x)|\le1}\one_{|r|\le 1}+|\varphi_n\ast G(x)| |r|\one_{|r(\varphi_n\ast G)(x)|\le1}\one_{|r|> 1}.
\end{align*}
From the calculations that led to (\ref{ha}) we conclude that the second and fourth term (when integrated with respect to $\nu(dr)\lambda^d(dx)$) converge to $0$ for $n\to\infty$ and for the first term we note that
\begin{align*}
\int\limits_{\bR^d}\one_{|r(\varphi_n\ast G)(x)|>1}\lambda^d(dx)&=d_{\varphi_n\ast G}\left(\frac{1}{|r|}\right)\le d_{G_R}\left(\frac{1}{|r|||\varphi_n||_{\infty}}\right)
\end{align*}
and by Lebesgue's dominated convergence theorem we conclude that
\begin{align*}
\int\limits_{\bR}\one_{|r|>1} d_{G_R}\left(\frac{1}{|r|||\varphi_n||_{\infty}}\right)\nu(dr)\to 0
\end{align*}
for $n\to\infty$, as $h_R(|r|)\ge d_{G_R}(1/|r|)$.
For the third term we easily see that
\begin{align*}
 \int\limits_{\bR}\int\limits_{\bR^d}(\varphi_n\ast G(x)r)^2 \one_{|r(\varphi_n\ast G)(x)|\le1}\one_{|r|\le 1}\lambda^d(dx)\nu(dr) \le ||\varphi_n\ast G(x)||_{L^2}^2\left( \,\,\int\limits_{\bR}\one_{|r|\le 1} |r|^2\nu(dr) \right)\to 0
\end{align*}
for $n\to\infty$. This gives (\ref{haha}). Finally, (\ref{hahaha}) follows from Young's inequality since
\begin{align*}
\|G\ast \varphi_n\|_{L^2(\bR^d)}^2\le \|G\|_{L^1(\bR^d)}^2\|\varphi_n\|_{L^2(\bR^d)}^2\to 0\,\textrm{ for }n\to\infty.
\end{align*}
The stationarity of the L\'{e}vy white noise implies the stationarity of the generalized process $s$,  as
\begin{align*}
\langle s(\cdot+ t),\varphi\rangle=\langle s, \varphi(\cdot -t)\rangle =\langle \dot{L}, G\ast \varphi(\cdot +t)\rangle=\langle \dot{L}(\cdot+(-t)),G\ast \varphi\rangle.
\end{align*}
\end{proof}
The kernel function $G$ has not always such nice integrability properties as assumed in Theorem \ref{theorem1}. For example, the Green function of the Laplacian is neither integrable nor square integrable. As this is the case, we will prove another theorem, which will assure the existence of the generalized process $s$ under some other conditions.
\begin{theorem}\label{theorem3.8}
If $G\in L^1_{loc}(\bR^d)$ such that $\|G\ast\varphi_n\|_{L^2(\bR^d)}\to 0$ for $n\to\infty$ for every sequence $(\varphi_n)_{n\in\bN}\subset \mathcal{D}(\bR^d)$ converging to $0$ and the L\'{e}vy white noise $\dot{L}$ has characterstic triplet $(a,\gamma,\nu)$ such that the first moment of $\dot{L}$ vanishes, i.e.  $\bE |\langle \dot{L}, \varphi\rangle|<\infty$ and $\bE \langle \dot{L}, \varphi\rangle=0$ for every $\varphi\in\mathcal{D}(\bR^d)$, then $s:\mathcal{D}(\bR^d)\to L^0(\Omega)$ defined by
\begin{align*}
s(\varphi):=\langle\dot{L},G\ast \varphi\rangle
\end{align*}
defines a stationary generalized process if 
\begin{align}
\label{wiesonur1}\int\limits_{\bR} \one_{|r|>1} |r|\int\limits_{\frac{1}{|r|}}^\infty d_{G_R}(\alpha)\lambda^1(d\alpha)\nu(dr)&<\infty\textrm{ and }\\
\label{wiesonur2}\int\limits_{\bR} \one_{|r|>1} |r|^2 \int\limits_{0}^{\frac{1}{|r|}} \alpha d_{G_R} (\alpha)\lambda^1(d\alpha)\nu(dr)&<\infty.
\end{align}
for all $R>0$, where $G_R$ is defined by $(\ref{eq201})$.
\end{theorem}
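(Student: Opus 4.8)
The plan is to mimic the proof of Theorem \ref{theorem1}, again reducing everything to the three convergence conditions of Rajput and Rosinski [\ref{Rajput}, Theorem 2.7] for $f_n:=G\ast\varphi_n$ with $\varphi_n\to 0$ in $\mathcal{D}(\bR^d)$, namely the analogues of (\ref{ha}), (\ref{haha}) and (\ref{hahaha}), but now exploiting the vanishing first moment in place of the global integrability $G\in L^1$. First I would fix a single radius $R>0$ with $\supp\varphi_n\subset B_r(0)$, $r<R$, for all $n$ (possible since the supports lie in one common compact set), so that the pointwise bound $|f_n(x)|\le\|\varphi_n\|_\infty\,G_R(x)$ and hence the distribution-function estimate $d_{f_n}(\alpha)\le d_{G_R}(\alpha/\|\varphi_n\|_\infty)$ from (\ref{wichtig}) remain available. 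Since $G\in L^1_{loc}(\bR^d)$ gives $G_R(x)<\infty$ for every $x$, the bound also shows $f_n\to 0$ pointwise.

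The first step is to rewrite the drift condition (\ref{ha}). Because $\bE|\langle\dot{L},\varphi\rangle|<\infty$ forces $\int_{|r|>1}|r|\,\nu(dr)<\infty$, and $\bE\langle\dot{L},\varphi\rangle=0$ forces $\gamma=-\int_{|r|>1}r\,\nu(dr)$, the drift functional in [\ref{Rajput}, Theorem 2.7] collapses, after cancelling the truncation terms, to $\Gamma(u)=-\int_{\bR}ru\,\one_{|ru|>1}\,\nu(dr)$; thus the first condition becomes $\int_{\bR^d}|\Gamma(f_n(x))|\,\lambda^d(dx)\to 0$, which I bound by $\int_{\bR}|r|\int_{\bR^d}|f_n|\,\one_{|f_n|>1/|r|}\,\lambda^d(dx)\,\nu(dr)$. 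The Gaussian condition (\ref{hahaha}) is immediate from the hypothesis $\|G\ast\varphi_n\|_{L^2}\to 0$.

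Next I would split every remaining $r$-integral at $|r|=1$. On $|r|\le 1$ the contributions are controlled exactly as in Theorem \ref{theorem1} by $\|f_n\|_{L^2}^2\int_{|r|\le 1}r^2\,\nu(dr)$, which tends to $0$ since $\int_{|r|\le 1}r^2\,\nu(dr)<\infty$ and $\|f_n\|_{L^2}\to 0$. The full weight of the argument therefore falls on $|r|>1$, and this is the step I expect to be the main obstacle, precisely because $G$ and $G_R$ are in general neither in $L^1$ nor in $L^2$, so no crude global bound survives. Here I would invoke the layer-cake identities [\ref{Grafakos}, Exercise 1.1.10] together with $d_{f_n}(\alpha)\le d_{G_R}(\alpha/\|\varphi_n\|_\infty)$ to estimate the two surviving pieces: the region $|rf_n|>1$ produces $\int_{|r|>1}d_{G_R}(1/(|r|\|\varphi_n\|_\infty))\,\nu(dr)$, while the region $|rf_n|\le 1$ produces, after the substitution $\beta=\alpha/\|\varphi_n\|_\infty$, a quantity bounded by $2\int_{|r|>1}(|r|\|\varphi_n\|_\infty)^2\int_0^{1/(|r|\|\varphi_n\|_\infty)}\beta\,d_{G_R}(\beta)\,\lambda^1(d\beta)\,\nu(dr)$.

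Finally I would conclude by dominated convergence in $r$. Taking $n$ so large that $\|\varphi_n\|_\infty\le 1/2$, monotonicity of $d_{G_R}$ furnishes the $n$-independent dominating functions $|r|\int_{1/|r|}^\infty d_{G_R}(\alpha)\,\lambda^1(d\alpha)$ and $|r|^2\int_0^{1/|r|}\alpha\,d_{G_R}(\alpha)\,\lambda^1(d\alpha)$ (up to an additive term of the same form as the integrand in (\ref{wiesonur1}), obtained by splitting the inner integral at $1/|r|$), and these are $\nu$-integrable over $|r|>1$ exactly by (\ref{wiesonur1}) and (\ref{wiesonur2}); the integrands themselves tend to $0$ pointwise in $r$ because $\|\varphi_n\|_\infty\to 0$ sends $d_{G_R}(1/(|r|\|\varphi_n\|_\infty))\to 0$ and $f_n\to 0$ almost everywhere. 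This yields the analogues of (\ref{ha}) and (\ref{haha}), which together with the Gaussian term give $\langle\dot{L},G\ast\varphi_n\rangle\to 0$ in $L^0(\Omega)$, so $s$ is a well-defined generalized random process; stationarity then follows verbatim from the stationarity of $\dot{L}$ exactly as in Theorem \ref{theorem1}. The delicate point throughout is checking that the pointwise limits vanish and that the rescaled inner integrals remain dominated, which is where the precise shapes of (\ref{wiesonur1}) and (\ref{wiesonur2}) — rather than mere finiteness of $\|G_R\|_{L^1}$ or $\|G_R\|_{L^2}$ — are indispensable.
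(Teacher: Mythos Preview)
Your proposal is correct and follows essentially the same route as the paper: both reduce to the Rajput--Rosinski conditions, exploit the vanishing first moment to collapse the drift term to $\int r f_n(x)\one_{|rf_n(x)|>1}\,\nu(dr)$ (the paper cites [\ref{Sato}, Example 25.12] for this, while you derive it directly from $\gamma=-\int_{|r|>1}r\,\nu(dr)$), split at $|r|=1$, handle $|r|\le 1$ via the $L^2$-hypothesis, and on $|r|>1$ apply the layer-cake formula together with $d_{f_n}(\alpha)\le d_{G_R}(\alpha/\|\varphi_n\|_\infty)$ and dominated convergence using (\ref{wiesonur1}) and (\ref{wiesonur2}). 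The only cosmetic difference is that for the $|rf_n|\le 1$, $|r|>1$ piece the paper uses the simpler monotonicity bound $d_{G_R}(\alpha/\|\varphi_n\|_\infty)\le d_{G_R}(\alpha)$ (for $\|\varphi_n\|_\infty\le 1$) directly, rather than your substitution $\beta=\alpha/\|\varphi_n\|_\infty$ followed by splitting at $1/|r|$; and the paper records explicitly the auxiliary inequality $d_{G_R}(1/|r|)\le 2|r|^2\int_0^{1/|r|}\alpha\,d_{G_R}(\alpha)\,d\alpha$ to justify the $\nu$-integrability of the $d_{G_R}(1/|r|)$ term, which you should also make explicit.
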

Observe that (\ref{ass2}) can be written as $\int_{|r|>1} |r|\int_0^{1/|r|} d_{G_R}(\alpha)\lambda^d(d\alpha)\nu(dr)$, which is slightly stronger than (\ref{wiesonur2}). However, for Theorem \ref{theorem3.8} we additionally need (\ref{wiesonur1}) and $\bE \langle \dot{L},\varphi\rangle=0$ for every $\varphi\in\mathcal{D}(\bR^d)$.
\begin{proof}
By [\ref{Sato}, Example 25.12, p. 163] we conclude that we need to show similar to Theorem \ref{theorem1} that (\ref{haha}), (\ref{hahaha}) and
\begin{align}
\label{eq101}&\int\limits_{\bR^d} \left|\int\limits_{\bR} r (\varphi_n\ast G)(x)\one_{|r(\varphi_n\ast G)(x)|> 1}\nu(dr)\right|\lambda^d(dx)\to 0,
\end{align}
are satisfied for all $(\varphi_n)_{n\in\bN}$ converging to $0$ in $\mathcal{D}(\bR^d)$.
Let $(\varphi_n)_{n\in\bN}\subset \mathcal{D}(\bR^d)$ converging to $0$ such that $\supp \varphi_n\subset B_R(0)$ for some $R>0$ and all $n\in\bN$. Using that $\int_{\bR^d} |f(x)|\one_{|f(x)|>\beta} \lambda^d(dx)=\int_{\beta}^\infty d_f(\alpha)\lambda^1(d\alpha)+\beta d_f(\beta)$ for $\beta>0$ and measurable $f$ (cf. [\ref{Grafakos}, Exercise 1.1.10, p. 14]), we estimate (\ref{eq101}) by 
\begin{align*}
&\int\limits_{\bR^d} \left|\int\limits_{\bR} r (\varphi_n\ast G)(x)\one_{|r(\varphi_n\ast G)(x)|> 1}\nu(dr)\right|\lambda^d(dx)\\
%\le&\int\limits_{\bR} |r|\int\limits_{\bR^d}  |(\varphi_n\ast G)(x)|\one_{|r(\varphi_n\ast G)(x)|> 1}\lambda^d(dx)\nu(dr)\\
\le&\left(\int\limits_{\bR}  \one_{|r|\le 1}|r|^2\nu(dr)\right)\|G\ast\varphi_n\|_{L^2(\bR^d)}^2+\int\limits_{\bR}\one_{|r|>1}|r|\int\limits_{\bR^d}  |(\varphi_n\ast G)(x)|\one_{|r(\varphi_n\ast G)(x)|> 1}\lambda^d(dx)\nu(dr)\\
=&\left(\int\limits_{\bR}  \one_{|r|\le 1}|r|^2\nu(dr)\right)\|G\ast\varphi_n\|_{L^2(\bR^d)}^2+\int\limits_{\bR}\one_{|r|>1}|r|\int\limits_{\frac{1}{|r|}}^{\infty} d_{\varphi_n\ast G}(\alpha)\lambda^1(d\alpha)\nu(dr)+\int\limits_{\bR}\one_{|r|>1}d_{\varphi_n\ast G}(1/|r|)\nu(dr)\\
&\to 0\\
%\le&\left(\int\limits_{\bR}  \one_{|r|\le 1}|r|^2\nu(dr)\right)\|G\ast\varphi_n\|_{L^2(\bR^d)}^2\\
%&+\int\limits_{\bR}\one_{|r|>1}|r|\int\limits_{\frac{1}{|r|}}^{\infty} d_{G_R}(\alpha/\|\varphi_n\|_{\infty})\lambda^1(d\alpha)\nu(dr)+\int\limits_{\bR}\one_{|r|>1}d_{G_R}(1/(|r|\|\varphi_n\|_{\infty}))\nu(dr)
\end{align*} 
for $n\to\infty$ by Lebesgue's dominated convergence, where we used that by (\ref{wichtig})
\begin{align*}
&\int\limits_{\bR}\one_{|r|>1}|r|\int\limits_{\frac{1}{|r|}}^{\infty} d_{\varphi_n\ast G}(\alpha)\lambda^1(d\alpha)\nu(dr)+\int\limits_{\bR}\one_{|r|>1}d_{\varphi_n\ast G}(1/|r|)\nu(dr)\\
\le&\int\limits_{\bR}\one_{|r|>1}|r|\int\limits_{\frac{1}{|r|}}^{\infty} d_{G_R}(\alpha/\|\varphi_n\|_{\infty})\lambda^1(d\alpha)\nu(dr)+\int\limits_{\bR}\one_{|r|>1}d_{G_R}(1/(|r|\|\varphi_n\|_{\infty}))\nu(dr)\\
\le&\int\limits_{\bR}\one_{|r|>1}|r|\int\limits_{\frac{1}{|r|}}^{\infty} d_{G_R}(\alpha)\lambda^1(d\alpha)\nu(dr)+\int\limits_{\bR}\one_{|r|>1}d_{G_R}(1/|r|)\nu(dr)
\end{align*}
for large $n$ and the latter is finite by (\ref{wiesonur1}), (\ref{wiesonur2}) and
\begin{align*}
\int\limits_{0}^{x} \alpha d_{G_R}(\alpha)\lambda^1(d\alpha)\ge d_{G_R}(x)\int\limits_{0}^x \alpha\lambda^1(d\alpha)=\frac{1}{2}d_{G_R}(x) x^2\textrm{ for every }x>0.
\end{align*}
This gives (\ref{eq101}). We control $(\ref{haha})$ by
\begin{align*}
&\int\limits_{\bR^d}  \int\limits_{\bR} 1\wedge (r (\varphi_n\ast G)(x))^2\nu(dr)\lambda^d(dx)\\
\le&\int\limits_{\bR^d}  \int\limits_{\bR}\one_{|r(\varphi_n\ast G)(x)|>1}\one_{|r|>1}+|\varphi_n\ast G(x)|^2 |r|^2\one_{|r|\le 1}+|\varphi_n\ast G(x)|^2 |r|^2\one_{|r(\varphi_n\ast G)(x)|\le1}\one_{|r|> 1}\nu(dr)\lambda^d(dx)\\
=:&I_1+I_2+I_3.
\end{align*}
We have already shown how to control $I_1$ and $I_2$, so we only need to show that $I_3$ converges to $0$ for $n\to\infty$. We see by [\ref{Grafakos}, Exercise 1.1.10] that
\begin{align*}
&\int\limits_{\bR^d}  \int\limits_{\bR}|\varphi_n\ast G(x)|^2 |r|^2\one_{|r(\varphi_n\ast G)(x)|\le1}\one_{|r|> 1}\nu(dr)\lambda^d(dx)\\
\le&2\int\limits_{\bR} \one_{|r|>1} r^2\int\limits_{0}^{\frac{1}{|r|}} \alpha d_{\varphi_n\ast G}(\alpha) \lambda^1(d\alpha)  \nu(dr)\to 0\\
\end{align*}
by using that
\begin{align*}&\int\limits_{\bR} \one_{|r|>1} r^2\int\limits_{0}^{\frac{1}{|r|}} \alpha d_{\varphi_n\ast G}(\alpha) \lambda^1(d\alpha)  \nu(dr)\le\int\limits_{\bR} \one_{|r|>1}r^2 \int\limits_{0}^{\frac{1}{|r|}} \alpha d_{G_R}(\alpha) \lambda^1(d\alpha) \nu(dr)<\infty\\
\end{align*}
for large $n$ by (\ref{wichtig}) and by assumption. Hence, we conclude that $s$ defines a generalized process. Stationarity follows by the same arguments as in the proof of Theorem \ref{theorem1}.
\end{proof}
%\begin{remark}
%Theorem \ref{theorem1} and Theorem \ref{theorem3.8} in spirit of Theorem 6.1 of [\ref{Fageot}] who show that $s:\mathcal{D}(\bR^d)\to L^0(\Omega)$, $\varphi\mapsto \langle \dot{L}, T\varphi\rangle $ defines a generalized random process $T:\mathcal{D}(\bR^d)\to L(\dot{L})$ is continuous and linear (see [\ref{Fageot}] for the topology on $L(\dot{L})$).
%\end{remark}
\begin{remark}
If for every $R>0$ there exists a bounded Borel set $A_R$  and a constant $C_R>0$ such that $G_R(x)\le C_R G(x)$ for all $x\in\bR^d \setminus A_R$, then we can replace $G_R$ by $G$ in (\ref{wiesonur}), (\ref{wiesonur1}) and (\ref{wiesonur2}).This follows from the estimate $d_{G_R}(\alpha)\leq \lambda^d(A_R)+d_G(\alpha/C_R)$ for (\ref{wiesonur}) and (\ref{wiesonur2}), and for (\ref{wiesonur1}) one can argue similarly to the proof of Example \ref{ichhabekeineahnung} below, using the boundedness of $G_R$ on a set $A_{2R}$ related to $A$.
\end{remark}
\begin{remark}
Under certain conditions one can replace $h_R(|r|)$ in (\ref{ass2}) by $d_{G_R}(1/|r|)$, for example if for every $R>0$, $d_{G_R} \in L^p([0,1])$ for some $p>1$ and $d_{G_R}(x)x^{1/p}\ge C$ for some constant $C>0$ independent of $x$. This follows by
\begin{align*}
\frac{1}{xd_{G_R}(x)}\int\limits_{0}^x d_{G_R}(\alpha ) \lambda^1(d\alpha)\le \frac{x^{1-1/p}}{xd_{G_R}(x)}\| d_{G_R}\|_{L^p([0,1])}\le \frac{1}{C}\| d_{G_R}\|_{L^p([0,1])}<\infty \textrm{ for all }x\in(0,1). 
\end{align*}
\begin{example}\label{ichhabekeineahnung}
We will discuss now two examples. For the first example, we assume that $G  \in L^1_{loc}(\bR^d)$  and there exist $\beta>d/2$, $C>0$ and a bounded, open set $A$ with $0\in A$ such that $|G(x)|\le C \|x\|^{-\beta}$ for all $x\in \bR^d\setminus A$. We find that
\begin{align*}
d_{G_R}(\alpha)\le C' (\alpha^{-\frac{d}{\beta}}+\one_{\alpha\le \|G_R\|_{L^\infty(A_{2R})}}),
\end{align*}
where $A_{2R}:=\{x\in \bR^d: dist(x,A)\le 2R\}$. We conclude 
\begin{align*}
\int\limits_{\frac{1}{|r|}}^\infty d_{G_R}(\alpha)\lambda^1(d\alpha)\le \tilde{C}|r|^{\frac{d}{\beta}-1} +C'\max\{\|G_R\|_{L^\infty(A_{2R})}-\frac{1}{|r|},0\}
\end{align*}
and
\begin{align*}
\int\limits_{0}^{\frac{1}{|r|}} \alpha d_{G_R}(\alpha) \lambda^1(d\alpha)\le \tilde{C} \left(|r|^{\frac{d}{\beta}-2}+|r|^{-2}\right)
\end{align*}
for some constant $\tilde{C}>0$ for all $|r|>1$. Writing $G=G\one_{B_M(0)}+G\one_{\bR^d\setminus B_M(0)}$ for large $M$, we have $G\one_{B_M(0)}\in L^1(\bR^d)$ and $G\one_{\bR^d\setminus B_M(0)}\in L^2(\bR^d)$ and since $\varphi_n\in L^1(\bR^d)\cap L^2(\bR^d)$ we obtain from Young's inequality that $\|G\ast\varphi_n\|_{L^2(\bR^d)}\to 0$, $n\to\infty$. If $\int\limits_{|r|>1} |r|^{\frac{d}{\beta}}\nu(dr)< \infty$,  we conclude by Theorem \ref{theorem3.8} (if $\dot{L}$ satisfies the assumptions specified there) that $s(\varphi):=\langle \dot{L}, G\ast \varphi\rangle $ defines a generalized random process.\\
For the second example we assume that
\begin{align*}
e^{c||x||}G(x)\in L^2(\bR^d)
\end{align*}
for some constant $c>0$. By the H\"older inequality we conclude
\begin{align*}
\int\limits_{\bR^d} |G(x)|\lambda^d(dx)\le ||\exp(-c||\cdot||)||_{L^2}\cdot|| \exp(c||\cdot||)G(\cdot)||_{L^2}<\infty
\end{align*}
and
\begin{align*}
\int\limits_{B_R(x)}|G(y)|\lambda^d(dy)\le ||e^{c||\cdot||}G||_{L_2}\left(\int\limits_{B_R(x)}e^{-2 c|| y||}\lambda^d(dy)\right)^{1/2}\le C_R \exp(-c||x||)
\end{align*}
for some constant $C_R>0$.  Hence,
\begin{align*}
d_{G_R}(\alpha)\le d_{\exp(-c||\cdot||)}\left(\frac{\alpha}{C_R}\right),
\end{align*}
for $\alpha>0$. We conclude that for $r\ge C_R$, 
\begin{align*}
\int\limits_{0}^{1/|r|} d_{G_R}(a)\lambda^1(da)&\le\int\limits_{0}^{\frac{1}{|r|}} C_d\left(\frac{\log\left(\frac{C_R}{\alpha}\right)}{c}\right)^d \lambda^1(d\alpha)\\&=\frac{C_d}{c^d} C_R\Gamma (d+1,\log(C_R|r|)) \\
&=\frac{C}{|r|}\sum\limits_{k=0}^d \frac{\log(C_R|r|)^k}{k!}
\end{align*}
for some finite constants $C_d$ and $C$, where $\Gamma(d+1,z)=\int_z^\infty t^d e^{-t}\lambda^1(dt)$ denotes the upper incomplete gamma function. Assuming $\int\limits_{|r|>1}\log(|r|)^d\nu(dr)<\infty$, we conclude
\begin{align*}
\int\limits_{\bR}\one_{|r|>1/C_R}  \left(C\sum\limits_{k=0}^d \frac{\log(C_R|r|)^k}{k!}\right)\nu(dr)<\infty
\end{align*}
and by Theorem \ref{theorem1} we obtain that $s$ defined as above defines a generalized process.
\end{example}
\end{remark}
Until now we have only given sufficient conditions for the existence of a generalized process $s$ defined by a convolution with a suitable kernel $G$. We will give a necessary condition if $G$ is positive in $\bR^d$.
\begin{corollary}\label{cor3.9}
Let $G\in L^1_{loc}(\bR^d)$ such that $G(x)\ge 0$ $\lambda^d-$a.e (or $G(x)\le 0$ $\lambda^d-$a.e.). Let $\dot{L}$ be a L\'{e}vy white noise with characteristic triplet $(a,\gamma,\nu)$. If $s:\mathcal{D}(\bR^d)\to L^0(\Omega)$ defined by $s(\varphi):=\langle \dot{L},G\ast\varphi\rangle$ for $\varphi\in\mathcal{D}(\bR^d)$ defines a generalized process, then 
\begin{align*}
\int\limits_{\bR} \one_{|r|>1} d_{G_R}\left( \frac{1}{|r|} \right)\nu(dr)<\infty
\end{align*} 
for every $R>0$ and $G_R$ defined by $(\ref{eq201})$.
\end{corollary}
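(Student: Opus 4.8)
The plan is to exploit the necessity direction of the Rajput--Rosinski integrability criterion [\ref{Rajput}, Theorem 2.7]: since $s(\varphi)=\langle\dot L,G\ast\varphi\rangle$ is a well-defined generalized process, every $G\ast\varphi$ lies in $L(\dot L)$, so in particular the condition analogous to (\ref{haha}),
\[
\int_{\bR^d}\int_{\bR}\bigl(1\wedge (r(G\ast\varphi)(x))^2\bigr)\,\nu(dr)\,\lambda^d(dx)<\infty,
\]
holds for every $\varphi\in\mathcal D(\bR^d)$. The whole argument then reduces to choosing one good test function and reading off the claim from this finiteness.

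Without loss of generality assume $G\ge 0$ $\lambda^d$-a.e.\ (the case $G\le 0$ follows by replacing $G$ with $-G$, using that $L(\dot L)$ is a linear space and $d_{G_R}=d_{(-G)_R}$). Fix $R>0$ and choose $\varphi\in\mathcal D(\bR^d)$ with $\varphi\ge 0$ and $\varphi\ge 1$ on the closed ball $\overline{B_R(0)}$; such a bump function clearly exists. Since $G\ge 0$ and $\varphi(x-y)\ge 1$ whenever $y\in B_R(x)$, we obtain the pointwise domination
\[
(G\ast\varphi)(x)=\int_{\bR^d}G(y)\varphi(x-y)\,\lambda^d(dy)\ \ge\ \int_{B_R(x)}G(y)\,\lambda^d(dy)=G_R(x)\ge 0
\]
for every $x\in\bR^d$. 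This is precisely where positivity of $G$ enters: for a sign-changing $G$ the convolution could cancel and would not control $G_R=\int_{B_R(\cdot)}|G|$.

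Next I would feed this into the integrability condition through the elementary bound $1\wedge t^2\ge\one_{|t|\ge 1}$. For each $r$ with $|r|>1$, using $G\ast\varphi\ge 0$ together with the domination and the monotonicity of the distribution function,
\[
\int_{\bR^d}\bigl(1\wedge (r(G\ast\varphi)(x))^2\bigr)\,\lambda^d(dx)\ \ge\ d_{G\ast\varphi}(1/|r|)\ \ge\ d_{G_R}(1/|r|).
\]
Because the double integral is finite and its integrand nonnegative, Tonelli's theorem allows me to integrate this lower bound against $\one_{|r|>1}\nu(dr)$, which yields
\[
\int_{\bR}\one_{|r|>1}\,d_{G_R}(1/|r|)\,\nu(dr)<\infty.
\]
Since $R>0$ was arbitrary, the corollary follows.

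The main obstacle --- really the only delicate point --- is the construction and correct normalization of the dominating test function: one must take $\varphi\ge 1$ (rather than merely $\varphi\ge c$) on $\overline{B_R(0)}$, so that after applying the non-increasing map $d_{G_R}$ the residual constant does not point the wrong way, and one must check that only nonnegativity of $G$, and no integrability beyond $L^1_{loc}$, is used. Everything else is a direct application of the Rajput--Rosinski criterion combined with Tonelli's theorem.
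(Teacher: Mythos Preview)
Your proof is correct and follows essentially the same route as the paper: both invoke the necessity direction of the Rajput--Rosinski criterion, choose a nonnegative test function $\varphi\ge 1$ on $B_R(0)$, use positivity of $G$ to obtain $(G\ast\varphi)(x)\ge G_R(x)$, and then bound the truncated second-moment integral from below by $\int_{|r|>1}d_{G_R}(1/|r|)\,\nu(dr)$. The only cosmetic difference is that you spell out the elementary inequality $1\wedge t^2\ge \one_{|t|\ge 1}$ and the Tonelli step explicitly.
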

\begin{proof}
We know that for $\varphi\in\mathcal{D}(\bR^d)$, it is necessary for $G\ast \varphi \in L(\dot{L})$ that (cf. [\ref{Rajput}, Theorem 2.7, p.461-462])
\begin{align}\label{whynot}
\nonumber\infty>\int\limits_{\bR}\int\limits_{\bR^d}(1\wedge (r\varphi\ast G)(x)^2)\lambda^d(dx)\nu(dr)\ge& \int\limits_{\bR}\int\limits_{\bR^d} \one_{|r|>1}\one_{|r\varphi\ast G|>1}\lambda^d(dx)\nu(dr)\\
=& \int\limits_{\bR}\one_{|r|>1} d_{G\ast\varphi}\left(1/|r|\right)\nu(dr).
\end{align}
Now let $\varphi\in\mathcal{D}(\bR^d)$, $\varphi\ge 0$ such that $\varphi\ge 1$ in $B_R(0)$. We see that
\begin{align*}
  d_{G\ast\varphi}(\alpha)&=\lambda^d\left(\left\{x\in\bR^d: \int\limits_{\bR^d} G(x-y)\varphi(y)\lambda^d(dy)>\alpha\right\}\right)\\
&\ge\lambda^d\left(\left\{x\in\bR^d: \int\limits_{B_R(x)} G(y)\lambda^d(dy)>\alpha\right\}\right)=d_{G_R}(\alpha).
\end{align*}
By assumption we conclude
\begin{align*}
\int\limits_{\bR}\one_{|r|>1} d_{G_R}\left(1/|r|\right)\nu(dr)\le \int\limits_{\bR}\int\limits_{\bR^d}(1\wedge (r\varphi\ast G)(x)^2)\lambda^d(dx)\nu(dr)<\infty.
\end{align*}
\end{proof}
\subsection{Homogeneous Elliptic SPDEs} Let $p(z)=\sum_{|\alpha|\leq m} a_{\alpha} z^{\alpha}$ be a polynomial in $d$ variables and $\dot{L}$ some L\'{e}vy noise. We are interested in generalized solutions of the stochastic partial differential equation
\begin{align}\label{elliptic}
p(D)s=\dot{L}.
\end{align}
This means formally
$$ \langle p(D)s,\varphi\rangle=\langle \dot{L} ,\varphi\rangle \quad \forall\,\varphi \in\mathcal{D}(\bR^d).$$
We interprete the left-hand side of this equation as $\langle s,p^*(D)\varphi\rangle$, where $p^*(D)$ denotes the formal adjoint operator of $p(D)$, which is known to be given by $p(-D)$. Hence, by definition, by a solution of (\ref{elliptic}) we mean a generalized process $s$ that satisfies
$$\langle s, p^*(D)\varphi\rangle=\langle \dot{L},\varphi\rangle\quad\forall \;\varphi\in\mathcal{D}(\bR^d).$$
Let $G$ be a fundamental solution of the operator $p^*(D)$, i.e. a distribution such that $p^*(D) G\ast \varphi=\varphi$ for every $\varphi\in\mathcal{D}(\bR^d)$. By the theorem of Malgrange-Ehrenpreis, such a fundamental solution always exists. Suppose this fundamental solution arises actually from a locally integrable function $G$ such that the assumptions of Theorem \ref{theorem1} or Theorem \ref{theorem3.8} are satisfied. If we then define the generalized process $s$ by $\langle s,\varphi\rangle:=\langle \dot{L},G\ast \varphi\rangle$ for $\varphi\in\mathcal{D}(\bR^d)$, then this defines a generalized process that satisfies (\ref{elliptic}). This follows from the simple calculation
\begin{align*}
\langle p(D)s,\varphi\rangle=\langle s,p^*(D)\varphi\rangle=\langle \dot{L},G\ast (p^*(D)\varphi)\rangle =\langle \dot{L},p^*(D)G\ast\varphi\rangle=\langle \dot{L},\varphi\rangle.
\end{align*}
To find conditions when Theorem \ref{theorem3.8} can be applied, we specialise to homogeneous elliptic partial differential operators. We say that a polynomial is elliptic homogeneous of degree $m$ if $p(z)=\sum\limits_{|\alpha|= m} a_{\alpha} z^{\alpha}$ and $p(z)\neq 0$ for all $z\in\bR^d\setminus \{0\}$. We call $p(D)$ then an elliptic homogenous partial differential operator of degree $m$. Observe that in this case the adjoint operator is given by $p^*(D)=(-1)^mp(D)$. Hence, the fundamental solution of $p^*(D)$ and $p(D)$ differ only by the factor $(-1)^m$. We now have:
%Our solution of our SPDEs  can be written later as $s(\varphi)=\langle \dot{L}, G\ast\varphi \rangle$, where $G$ is the fundamental solution of a special operator $A$, which means that $A G\ast \varphi= \varphi$ for every $\varphi\in \mathcal{D}(\bR^d)$.\\
%We present an application of Theorem \ref{theorem3.8}  and prove an existence result for an elliptic problem, where we use the above concept of a fundamental solution. We say that a polyomial $p(z):=\sum\limits_{|\alpha|\le m} a_{\alpha} z^{\alpha}$ is elliptic if $p_m(z)\neq 0$ for every $z\in\bR^d\setminus \{0\}$, where $p_m(z):=\sum\limits_{|\alpha|= m} a_{\alpha} z^{\alpha}$. The corresponding partial differential operator $p(D)=\sum\limits_{|\alpha|\le m}a_\alpha \partial^\alpha$ is called an elliptic operator. A partial differential operator $p(D)$ is called homogeneous of degree $m$ if $p(z)=\sum\limits_{|\alpha|= m} a_{\alpha} z^{\alpha}$. We study the stochastic partial differential equation
%\begin{align}\label{elliptic}
%p(D)s=\dot{L},
%\end{align}
%where $p(D)$ is a homogeneous elliptic partial differential operator.
\begin{proposition} Let $p(D)$ be an elliptic homogeneous partial differential operator of order $m\in\bN$. If $d>2m$ and the L\'{e}vy white noise $\dot{L}$ with characteristic triplet $(a,\gamma,\nu)$ satisfies
\begin{align*}
\int\limits_{\bR} \one_{|r|>1} |r|^{\frac{d}{d-m}+\varepsilon}\nu(dr)<\infty
\end{align*}
for some $\varepsilon>0$ and the first moment of $\dot{L}$ vanishes, then there exists a generalized process $s$ which solves the SPDE $(\ref{elliptic})$.
\end{proposition}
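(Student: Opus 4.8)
The plan is to follow exactly the recipe set out in the preceding discussion of elliptic SPDEs: I would produce a fundamental solution $G$ of $p^*(D)$ that arises from a locally integrable function for which the hypotheses of Theorem~\ref{theorem3.8} hold, and then define $s(\varphi):=\langle\dot{L},G\ast\varphi\rangle$. By the computation recorded there, such an $s$ automatically satisfies $(\ref{elliptic})$, so the whole problem reduces to exhibiting one fundamental solution with good enough integrability and decay. Because $p$ is homogeneous elliptic of order $m$, the adjoint is $p^*(D)=(-1)^m p(D)$, so a fundamental solution of $p^*(D)$ is, up to the harmless factor $(-1)^m$, a fundamental solution of $p(D)$; I would therefore work with the latter.

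The heart of the argument is the classical structure of the fundamental solution of a homogeneous elliptic operator, and this is the step I expect to be the main obstacle. Its Fourier transform is, up to a constant, $1/p(\xi)$, which by ellipticity is smooth on $\bR^d\setminus\{0\}$ and homogeneous of degree $-m$; since $m<d$ it is locally integrable near the origin and defines a tempered distribution with a canonical homogeneous extension. Its inverse Fourier transform $G$ is then homogeneous of degree $m-d$ and smooth away from the origin, because the critical degrees at which logarithmic corrections could appear (namely $m-d\in\{0,1,2,\dots\}$ or $m-d\le -d$) are excluded by $0<m<d$. Restricting the homogeneous function $G$ to the unit sphere, where it is continuous and hence bounded, yields the pointwise bound
\begin{align*}
|G(x)|\le C\|x\|^{m-d}=C\|x\|^{-(d-m)},\qquad x\neq 0,
\end{align*}
and, as $-d<m-d<0$, also $G\in L^1_{loc}(\bR^d)$.

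With this bound in hand the proposition follows by a direct application of the first example treated in Example~\ref{ichhabekeineahnung}. I would set $\beta:=d-m$, noting that $\beta>d/2$ is equivalent to $d>2m$, which is assumed, and that the decay hypothesis $|G(x)|\le C\|x\|^{-\beta}$ outside a bounded neighbourhood of $0$ holds by the estimate above. The moment requirement of that example reads $\int_{\bR}\one_{|r|>1}|r|^{d/\beta}\nu(dr)<\infty$; since $d/\beta=d/(d-m)\le d/(d-m)+\varepsilon$ on $\{|r|>1\}$, it is guaranteed by the hypothesis of the proposition. The remaining input, $\|G\ast\varphi_n\|_{L^2(\bR^d)}\to 0$, is verified in the example itself from $\beta>d/2$ via the splitting $G=G\one_{B_M(0)}+G\one_{\bR^d\setminus B_M(0)}$: the first summand lies in $L^1(\bR^d)$ because $m-d>-d$, while the second lies in $L^2(\bR^d)$ precisely because $d>2m$ makes $\int_{\|x\|>M}\|x\|^{2(m-d)}\lambda^d(dx)$ finite, so Young's inequality applies. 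As the vanishing of the first moment of $\dot{L}$ is assumed, Theorem~\ref{theorem3.8} yields that $s$ is a stationary generalized process, and by the reduction above it solves $(\ref{elliptic})$.

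A word on the role of the $+\varepsilon$: in the strictly elliptic homogeneous case with $0<m<d$ there is in fact no logarithmic term, so $\beta=d-m$ is attained and the bare condition $d/(d-m)$ already suffices; the extra $\varepsilon$ only provides robustness. Should $G$ nonetheless pick up a factor $\log\|x\|$ in a borderline situation, one uses $\|x\|^{m-d}\,|\log\|x\||\le C_\delta\|x\|^{m-d+\delta}$ for large $\|x\|$, takes $\beta=d-m-\delta$ with $\delta>0$ so small that still $\beta>d/2$ and $d/\beta\le d/(d-m)+\varepsilon$ (possible since $d>2m$ and $d/(d-m-\delta)\to d/(d-m)$ as $\delta\downarrow 0$), and the argument proceeds verbatim.
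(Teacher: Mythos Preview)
Your proof is correct and follows the same strategy as the paper: establish a power-law decay bound on a locally integrable fundamental solution of $p^*(D)$ and then invoke the first case of Example~\ref{ichhabekeineahnung} (and thereby Theorem~\ref{theorem3.8}). The only difference is that the paper simply cites [\ref{Ortner}, Proposition~2.4.8] for the estimate $|G(x)|\le c\,\|x\|^{m-d}\log\|x\|$ on $\|x\|\ge 2$, whereas you derive the sharper bound $|G(x)|\le C\,\|x\|^{m-d}$ directly from the homogeneity of $1/p(i\cdot)$ and correctly note that the extra $+\varepsilon$ in the moment condition is there precisely to absorb a possible logarithmic factor.
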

\begin{proof}
It is known that in that case, the fundamental solution arises from a locally integrable function $G$ that satisfies $|G(x)|\le c \|x\|^{m-d}\log(\|x\|)$ for all $\|x\|\ge 2$ and some constant $c>0$, see [\ref{Ortner}, Proposition 2.4.8, p. 155]. The rest follows by Example \ref{ichhabekeineahnung}.
\end{proof}
\begin{remark}
In the case of the Laplacian $\Delta$, when $d\ge 5$, with methods similar to the proof of Example \ref{ichhabekeineahnung} one can show that it is enough that 
\begin{align}\label{eq10101}
\int\limits_{\bR} \one_{|r|>1} |r|^{\frac{d}{d-2}}\nu(dr)<\infty
\end{align}
for the existence of a generalized solution. Moreover, if we choose for $\Delta$ the fundamental solution $G(x)=c_d |x|^{2-d}$, where $c_d\in\bR\setminus\{0\}$, then by Corollary \ref{cor3.9} it is also necessary that $(\ref{eq10101})$ holds true for $\langle \dot{L},G\ast\varphi\rangle$ to define a generalized solution.
\end{remark}
\section{CARMA generalized processes}\label{section2}
We construct a generalization of CARMA processes. A CARMA generalized process is a generalized solution of a special SPDE.
\begin{definition}\label{carma}
Let $\dot{L}$ be a L\'{e}vy white noise, $n,m\in\bN_0$ and $p,q:\bR^d\to\bR$ be polynomials of the form
\begin{align*}
p(x)=\sum\limits_{|\alpha|\le n} p_{\alpha}x^{\alpha}
\end{align*}
and
\begin{align*}
q(x)=\sum\limits_{|\alpha|\le m}q_{\alpha}x^{\alpha}.
\end{align*}
A generalized process $s: \mathcal{D}(\bR^d)\to L^0(\Omega)$ is called a CARMA$(p,q)$ generalized process if $s$ solves the equation
\begin{align*}
p(D)s=q(D)\dot{L},
\end{align*}
which means that
\begin{align}\label{spde}
\langle s,p(D)^*\varphi\rangle =\langle \dot{L},q(D)^*\varphi\rangle\textrm{ a.s. for every }\varphi\in\mathcal{D}(\bR^d).
\end{align}
\end{definition}
Recall that $p(D)^*=p(-D)$ and $q(D)^*=q(-D)$.
For classical CARMA processes in dimension 1 the assumptions on the polynomials are that $q/p$ has only removable singularities on the imaginary axis and the degree of the polynomial $p$ is higher than the degree of $q$, which implies that $\|q/p\|_{L^2(i\bR)}<\infty$. For a detailed discussion see [\ref{Lindner}]. In dimension 1 CARMA generalized processes were defined in [\ref{Hannig}], where the white noise was assumed to be Gaussian and the polynomial $p$ has no zeroes on the imaginary axis, see  [\ref{Hannig}, Proposition 2.5, p. 3616]. All the assumptions above imply even more, namely that $q/p$ has a holomorphic extension on the strip $\{z\in \bC: |\Re z|<\varepsilon\}$ for a small $\varepsilon>0$. We take this as an assumption also for higher dimensions $d$:
\begin{assumption}\label{ass1}
The rational function $q(i\cdot)/p(i\cdot)$ has a holomorphic extension in a strip $\{z\in\bC^d: \|\Im z\|<\varepsilon\}$ for some $\varepsilon>0$.
\end{assumption}
This assumption implies especially that there exist two polynomials $h$ and $l$ such that $h(i\cdot)/l(i\cdot)=p(i\cdot)/q(i\cdot)$ and $l(i\cdot)$ has no zeroes in the strip $\{z\in\bC^d: \|\Im z\|\le\varepsilon/2\}$. Hence we may and do assume for the rest of this section that $h=p$ and $l=q$.\\
We prove an existence theorem under mild moment conditions.
%In the one-dimensional case it is assumed that $p(z)=(z-\lambda_1)\dots(z-\lambda_{\deg(p)})$ and $q(z)=(z-\beta_1)\dots(z-\beta_{\deg(q)})$ are such that there exists a factor $\tilde{q}$ of the polynomial $q$ such that $\frac{\tilde{q}(iz)}{p(iz)}$ has no roots on the real numbers and is a polynomial of degree greater or equal to $1$. Especially, this implies that it has no roots in a strip $\{z\in\bC: |\Im z|<2\varepsilon\}$ for some $\varepsilon>0$ and $\sup_{|\eta|<\varepsilon}\left\|\frac{\tilde{q}(i\cdot+\eta)}{p(i\cdot+\eta)}\right\|_{L^2}<\infty$.
%\begin{assumptions}\label{ass1}
%Asume that there exists a rational function $\tilde{q}$ such that $\varepsilon>0$ such that $\frac{p(i\cdot)}{\tilde q(i\cdot)}$ does not have a root on $\{ z\in\bC^d:\,|\Im z|<2\varepsilon \}$ and 
%\begin{align}\label{eq4.1}
%&\sup_{\eta\in B_{\varepsilon}(0)}\left\|\frac{\tilde{q}(i\cdot+\eta)}{p(i\cdot+\eta)}\right\|_{L^2}<\infty\\
%&\frac{q(i\cdot)}{\tilde{q}(i\cdot)}:\mathcal{D}(\bR^d)\to\mathcal{D}(\bR^d) \textrm{ defines a continuous Fourier multiplier}.
%\end{align}
%\end{assumptions}

%We prove an existence theorem under mild moment conditions. Therefore, we will need a result for Fourier transfroms and their decay. We use that, if
% $f\in L^2(\bR^d)$ is such that it is analytic in a strip $\{z\in\bC^d|\,|\Im z|< a+\varepsilon\}$ and $\sup_{|\eta|< a+\varepsilon} ||f(\cdot+i\eta)||_{L^2}<\infty$ for some $\varepsilon>0$, then for the Fourier transform $\mathcal{F}f$ it holds that $\exp\left(a||\cdot||\right)\mathcal{F}f(\cdot)\in L^2(\bR^d)$, see [\ref{Reed}, Theorem XI.13, p.18].
\begin{theorem}\label{theoremcarma}
Let $p,q$ be polynomials as in Definition $\ref{carma}$ such that the Assumption \ref{ass1} holds true. Furthermore, let $\dot{L}$ be a L\'{e}vy white noise with characteristic triplet $(a,\gamma,\nu)$ such that $$\int\limits_{\bR}\one_{|r|>1} \log(|r|)^d\nu(dr).$$ Then there exists a stationary CARMA$(p,q)$ generalized process.
\end{theorem}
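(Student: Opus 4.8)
The plan is to produce a solution in the factored form $s=q(D)u$, where $u$ is the generalized process solving the ``autoregressive'' equation $p(D)u=\dot{L}$. This device is what lets me dispense with any assumption relating $\deg p$ and $\deg q$: once $u$ is a bona fide generalized process, $q(D)u$ is automatically one as well, since $q(D)^*$ maps $\mathcal{D}(\bR^d)$ continuously into itself, and no integrability of the full transfer kernel $\mathcal{F}^{-1}(q(-i\cdot)/p(-i\cdot))$ is needed. Concretely I would set $\langle s,\varphi\rangle:=\langle u,q(D)^*\varphi\rangle$ and check (\ref{spde}) by the computation $\langle s,p(D)^*\varphi\rangle=\langle u,p(D)^*q(D)^*\varphi\rangle=\langle p(D)u,q(D)^*\varphi\rangle=\langle\dot{L},q(D)^*\varphi\rangle$, using that constant-coefficient operators commute and that $u$ solves $p(D)u=\dot{L}$. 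Stationarity of $s$ then follows from stationarity of $u$ together with the translation invariance of the operator $q(D)$.

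So everything reduces to constructing $u$, i.e. to realizing the fundamental solution of $p(D)$ as a kernel to which Theorem \ref{theorem1} applies. First I would use Assumption \ref{ass1}: after cancelling common factors (passing to the reduced transfer function, as in the reduction preceding the theorem) I may assume that the denominator $p(i\cdot)$ has no zeros in the closed strip $\{z\in\bC^d:\|\Im z\|\le\varepsilon/2\}$; a solution of the reduced equation is a solution of the original one, since applying the cancelled factor $g(D)$ to both sides recovers $p(D)s=q(D)\dot{L}$. Set $K:=\mathcal{F}^{-1}\big(1/p(-i\cdot)\big)$ and define $u(\varphi):=\langle\dot{L},K\ast\varphi\rangle$. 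The fundamental solution property $p(D)^*(K\ast\varphi)=\varphi$ is immediate on the Fourier side, since $\mathcal{F}(p(D)^*(K\ast\varphi))=p(-i\cdot)\,\frac{1}{p(-i\cdot)}\,\mathcal{F}\varphi=\mathcal{F}\varphi$, so that $u$ indeed solves $p(D)u=\dot{L}$ as soon as it is known to be a generalized process.

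To apply Theorem \ref{theorem1} I would verify that $K\in L^1(\bR^d)$ and that it satisfies the moment condition (\ref{ass2}). The geometric input is that $1/p(-i\cdot)$ (equivalently $1/p(i\cdot)$, by reflection) is holomorphic in the strip; shifting the contour of the inverse Fourier transform into the strip in the direction of $x$, a Paley--Wiener/Bernstein argument, yields exponential decay, and more precisely the ball averages $K_R(x)=\int_{B_R(x)}|K(y)|\lambda^d(dy)$ satisfy $K_R(x)\le C_R e^{-c\|x\|}$ for some $c>0$; local integrability of $K$ near the origin (the order of the pole at infinity being $\deg p>0$) gives $K\in L^1_{loc}$ and hence, with the exponential ball bound, $K\in L^1(\bR^d)$. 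From here the argument is precisely that of the second part of Example \ref{ichhabekeineahnung}: the bound $d_{K_R}(\alpha)\le d_{\exp(-c\|\cdot\|)}(\alpha/C_R)$ gives $\int_0^{1/|r|}d_{K_R}(\alpha)\lambda^1(d\alpha)\le \frac{C}{|r|}\sum_{k=0}^d\frac{\log(C_R|r|)^k}{k!}$, so that $h_R(|r|)\lesssim \log(|r|)^d$ for large $|r|$ and condition (\ref{ass2}) collapses to exactly the hypothesis $\int_{\bR}\one_{|r|>1}\log(|r|)^d\,\nu(dr)<\infty$. Theorem \ref{theorem1} then delivers $u$ as a stationary generalized process, and the first two paragraphs finish the proof.

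I expect the genuine obstacle to be this middle step: extracting the exponential ball bound $K_R(x)\le C_Re^{-c\|x\|}$ together with the local integrability of $K$ from Assumption \ref{ass1} alone. The contour shift is clean when $1/p(i\cdot)$ decays along every real direction (e.g. when $p$ is elliptic, so that $K\in L^2$ and Example \ref{ichhabekeineahnung} applies verbatim), but a multivariate $p$ satisfying Assumption \ref{ass1} need not grow in all directions, in which case $\mathcal{F}^{-1}(1/p(-i\cdot))$ threatens to be genuinely distributional rather than an $L^1$ function. Controlling this degeneracy, and checking that the constants produced by the contour shift really land on the $\log(|r|)^d$ moment bound and not a larger one, is where the analytic care concentrates; the algebraic verification of (\ref{spde}) and of stationarity in the outer two paragraphs is routine by comparison.
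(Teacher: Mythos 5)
Your outer reduction is fine: setting $\langle s,\varphi\rangle:=\langle u,q(D)^{*}\varphi\rangle$ and checking $(\ref{spde})$ by commuting constant-coefficient operators is exactly the kind of compensation the paper also uses, and it is correct that $q(D)^{*}$ maps $\mathcal{D}(\bR^d)$ continuously into itself. But the step you yourself flag as the ``genuine obstacle'' is a genuine gap, and it does not close under Assumption \ref{ass1} alone. Take $d=2$, $q=1$ and $p$ with $p(iz)=1+z_1^2$; Assumption \ref{ass1} holds (no zeros in $\{\|\Im z\|<1\}$), yet $\mathcal{F}^{-1}\bigl(1/p(-i\cdot)\bigr)=\tfrac12 e^{-|x_1|}\otimes\delta_0(x_2)$ is a measure, not a locally integrable function, so $K\notin L^1(\bR^d)$ and Theorem \ref{theorem1} cannot be applied to it. The contour-shift/Paley--Wiener argument you invoke needs $1/p(-i\cdot+\eta)$ to lie in $L^2(\bR^d)$ uniformly on a strip, which already fails for elliptic $p$ with $2\deg p\le d$ and fails badly when $p$ does not depend on all variables. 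Factoring out $q(D)$ does not help, because the problem is already present for the pure AR equation $p(D)u=\dot L$.

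The paper's proof repairs precisely this point, and the repair is the one ingredient your proposal is missing: instead of compensating only by $q(D)^{*}$, one divides the full transfer function by $\psi(z)=(1+\sum_j z_j^2)^{\alpha}$, where Hörmander's lemma on division by polynomials guarantees an $\alpha\in\bN$ with $\sup_{\|\eta\|\le\delta}\|q(-i\cdot+\eta)/(p(-i\cdot+\eta)\psi(\cdot+i\eta))\|_{L^2}<\infty$. The factor $\psi$ forces decay in \emph{all} $d$ directions, so Paley--Wiener gives a genuine kernel $G$ with $e^{c\|x\|}G\in L^2(\bR^d)$ (hence $G\in L^1$, and the second part of Example \ref{ichhabekeineahnung} yields exactly the $\log(|r|)^d$ moment condition, as you anticipated). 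The lost factor is then recovered on the test-function side via $\mathcal{F}^{-1}(\psi\,\mathcal{F}\varphi)=(1-\Delta)^{\alpha}\varphi$, a continuous map $\mathcal{D}\to\mathcal{D}$, defining $\langle s,\varphi\rangle=\langle\dot L,\,G\ast(1-\Delta)^{\alpha}\varphi\rangle$. Your $s=q(D)u$ device is structurally the same compensation trick, but with the wrong compensating operator: $q(D)$ is dictated by the equation and carries no guarantee of supplying decay, whereas $(1-\Delta)^{\alpha}$ is chosen expressly to do so. If you insert the $\psi$-regularization (and the Hörmander input that some $\alpha$ works uniformly on a strip), your argument becomes the paper's.
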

\begin{proof}
Under the Assumption \ref{ass1} it follows by arguments similar as in the proof of [\ref{Hormander}, Lemma 2, p. 557] that there exists an $\alpha\in\bN$ and $\delta>0$ such that
$$
\sup_{\|\eta\|\le \delta} \left\|  \frac{q(-i\cdot+\eta)}{p(-i\cdot+\eta)\psi(\cdot+i\eta)}\right\|_{L^2(\bR^d)}<\infty,
$$
where $$\psi(z):=\left(1+\sum\limits_{j=1}^d z_i^2\right)^{\alpha}.$$
It follows by a Paley-Wiener theorem (e.g. [\ref{Reed}, Theorem XI.13, p.18]) that the inverse Fourier transform $G$ of $\frac{q(-i\cdot)}{\psi(\cdot)p(-i\cdot)}$ satisfies 
\begin{align*}
e^{c||x||}G(x)\in L^2(\bR^d)
\end{align*}
for some $0<c<\delta$. Observe that $G$ is indeed real-valued, as $ \frac{q(-i\cdot)}{p(-i\cdot)\psi(\cdot)}=\overline{ \frac{q(i\cdot)}{p(i\cdot)\psi(-\cdot)}}$.
%for some constant $c>0$. By the H\"older inequality we conclude
%\begin{align*}
%\int\limits_{\bR^d} |G(x)|\lambda^d(dx)\le ||\exp(-c||\cdot||)||_{L^2}\cdot|| \exp(c||\cdot||)G(\cdot)||_{L^2}<\infty
%\end{align*}
%and
%\begin{align*}
%\int\limits_{B_R(x)}|G(y)|\lambda^d(dy)=&\int\limits_{B_R(x)} e^{-c||y||}e^{c||y||}|G(y)|\lambda^d(dy)\\
%\le& ||e^{c||\cdot||}G||_{L_2}\left(\int\limits_{B_R(x)}e^{-2 c|| y||}\lambda(dy)\right)^{1/2}\\
%\le& C_R \exp(-c||x||)
%\end{align*}
%for some constant $C_R>0$.  Moreover, we know that 
%\begin{align*}
%d_{G_R}(\alpha)\le d_{\exp(-c||\cdot||)}\left(\frac{\alpha}{C_R}\right),
%\end{align*}
%for $\alpha>0$.We conclude
%\begin{align*}
%\int\limits_{0}^x d_{G}(a)\lambda^1(da)&\le\int\limits_{0}^{\frac{1}{|r|}} C_d\left(\frac{\log\left(\frac{C_R}{\alpha}\right)}{c}\right)^d \lambda^1(d\alpha)\\&=\frac{C_d}{c^d} C_R\Gamma (d+1,\log(C_R|r|)) \\
%&=\frac{C}{|r|}\sum\limits_{k=0}^d \frac{\log(C_R|r|)^k}{k!}
%\end{align*}
%for some finite constants $C_d$ and $C$.
%We obtain
%\begin{align*}
%\int\limits_{\bR}\one_{|r|>1}  \left(C\sum\limits_{k=0}^d \frac{\log(C_Rr|)^k}{k!}\right)\nu(dr)<\infty
%\end{align*}
%and
%\begin{align*}
%\int\limits_{\bR}\one_{|r|>1}\left(d_{G_R}(1/|r|) \right) \nu(dr)<\infty.
%\end{align*}
Observe that $\psi$ is a continuous Fourier multiplier from $\mathcal{D}(\bR^d)$ to $\mathcal{D}(\bR^d)$, as $$\mathcal{F}^{-1}(\psi(\cdot)\mathcal{F}\varphi)=(1-\Delta)^{\alpha}\varphi.$$ 
By Example \ref{ichhabekeineahnung} follows that $s$ defined by
\begin{align}\label{constructed process}
\langle s,\varphi\rangle:=\left\langle \dot{L}, G\ast \mathcal{F}^{-1}\left(\psi(\cdot)\mathcal{F}\varphi\right)\right\rangle\textrm{ for every }\varphi\in\mathcal{D}(\bR^d)
\end{align}
 defines a generalized process and by similar arguments to the proof of Theorem \ref{theorem1} it follows that $s$ is stationary. Now let $\varphi\in\mathcal{D}(\bR^d)$, we conclude by $\mathcal{F}p(-D)\varphi=p(-i\cdot)\mathcal{F}\varphi$ for all $\varphi\in\mathcal{D}(\bR^d)$ that
\begin{align*}
\langle s,p(D)^*\varphi\rangle&=\left\langle \dot{L}, \left(G\ast \mathcal{F}^{-1}\left(\psi(\cdot)\mathcal{F}(p(D)^*\varphi)\right)\right)\right\rangle\\
&=\left\langle \dot{L}, \mathcal{F}^{-1} \left(\psi(\cdot)\frac{q(-i\cdot)}{\psi(\cdot)p(-i\cdot)}p(-i\cdot)\mathcal{F}\varphi\right)\right\rangle\\
&=\langle \dot{L},\mathcal{F}^{-1}\left(q(-i\cdot)\mathcal{F}\varphi\right)\rangle=\langle\dot{L},q(D)^*\varphi\rangle.
\end{align*}
\end{proof}
\begin{remark} Under the assumptions of Theorem \ref{theoremcarma} the only solutions of $(\ref{spde})$ are of the form $s+u$, where $s$ is the solution constructed in Theorem \ref{theoremcarma} and $u$ solves the equation $\langle u,p(D)^*\varphi\rangle=0$ a.s. for every $\varphi\in\mathcal{D}(\bR^d)$.
\end{remark}
We obtain directly the following corollary, which generalizes [\ref{Hannig}, Proposition 2.5, p. 3616] from Gaussian noise to L\'{e}vy white noise.
\begin{corollary}
Let $d=1$ and $p(z)=\prod_{j=1}^n (p_j-z)$ and $q(z)=\prod_{j=1}^m (q_j-z)$ be two real polynomials, such that $p/q$ has no roots on the imaginary axis. Then there exists a stationary generalized solution $s:\mathcal{D}(\bR^d)\to L^0(\Omega)$ of the equation 
$p(\frac{d}{dx})s=q(\frac{d}{dx})\dot{L}$ for every L\'{e}vy white noise $\dot{L}$ with characteristic triplet $(a,\gamma,\nu)$ such that $\int_{|r|>1} \log(|r|)\nu(dr)$. 
%The discussion at the beginning of this section shows that Theorem \ref{theoremcarma} is an extension of [\ref{Hannig}, Proposition 2.5, p. 3616].
\end{corollary}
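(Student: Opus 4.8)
The plan is to obtain this statement as a direct specialization of Theorem \ref{theoremcarma} to dimension $d=1$, so that the only genuine work is to check that the stated hypothesis on $p$ and $q$ forces Assumption \ref{ass1} to hold. Once Assumption \ref{ass1} is verified, the log-moment requirement of Theorem \ref{theoremcarma} reads $\int_{|r|>1}\log(|r|)\,\nu(dr)<\infty$ for $d=1$, which is exactly the condition imposed on $\dot L$; Theorem \ref{theoremcarma} then delivers a stationary generalized process $s$ solving $p(D)s=q(D)\dot L$, which in one variable is precisely $p(\frac{d}{dx})s=q(\frac{d}{dx})\dot L$. Note that no hypothesis of the form $\deg p>\deg q$ is needed here, since the $\psi$-regularization built into Theorem \ref{theoremcarma} already absorbs that classical requirement.

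So the heart of the argument is verifying Assumption \ref{ass1}, i.e. that $q(i\cdot)/p(i\cdot)$ extends holomorphically to a strip $\{z\in\bC:|\Im z|<\varepsilon\}$. First I would pass to the reduced (coprime) form of $q/p$ by cancelling common factors; by the discussion following Assumption \ref{ass1} it costs nothing to work with this reduced representation. The map $z\mapsto q(iz)/p(iz)$ is meromorphic, and its poles occur exactly at those $z$ with $p(iz)=0$ that are not cancelled by a zero of $q(iz)$, i.e. at $z=-ip_j$ for the non-cancelled roots $p_j$ of $p$. Writing $p_j=\alpha_j+i\beta_j$ gives $-ip_j=\beta_j-i\alpha_j$, so such a pole sits at height $\Im z=-\alpha_j=-\Re p_j$.

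The hypothesis is precisely that $p/q$ has no roots on the imaginary axis; since the roots of the rational function $p/q$ are exactly the non-cancelled zeros $p_j$ of $p$, this says $\Re p_j\neq 0$ for every non-cancelled $p_j$. As $p$ has only finitely many roots, the quantity $\delta:=\min_j|\Re p_j|$, taken over the non-cancelled roots, is strictly positive (and one may set $\delta=\infty$ if there are none, in which case the reduced $q/p$ is a polynomial and the claim is trivial). Choosing any $0<\varepsilon<\delta$, no pole of $z\mapsto q(iz)/p(iz)$ lies in the strip $\{|\Im z|<\varepsilon\}$, so the function is holomorphic there, which is Assumption \ref{ass1}.

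I expect the main (and essentially the only) subtlety to be the bookkeeping around common factors of $p$ and $q$: one must read ``$p/q$ has no roots on the imaginary axis'' as a statement about the \emph{reduced} rational function, so that a zero of $p$ that happens to lie on the imaginary axis but is cancelled by a zero of $q$ does not obstruct the holomorphic extension — and indeed it does not, since such a point is a removable singularity of $q/p$. Apart from this, the correspondence $\Im(-ip_j)=-\Re p_j$, which translates ``root off the imaginary axis'' into ``pole off the strip in the $\Im z$ direction,'' is the one computational point to get right, after which the corollary follows at once from Theorem \ref{theoremcarma}.
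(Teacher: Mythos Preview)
Your proposal is correct and matches the paper's approach: the paper presents this result as an immediate corollary of Theorem~\ref{theoremcarma} without giving any further argument, so verifying Assumption~\ref{ass1} via the pole computation $\Im(-ip_j)=-\Re p_j$ is exactly what is implicitly being invoked. If anything, you have supplied more detail than the paper does, including the careful handling of cancelled factors, which is appropriate here.
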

\begin{example}
Let us look at the polynomial $p(iz):=-\lambda-\sum\limits_{j=1}^dz_j^2$ for $d\in\bN$ with $\lambda>0$, which corresponds to the partial differential operator $L=-\lambda+\Delta$. The real part is given by $\Re\, p(iz)=-\lambda-\sum\limits_{j=1}^{d} ((\Re\, z_j)^2-(\Im \,z_j)^2)$, from which we conclude that $p(i\cdot)$ has no roots in $\{z\in\bC^d:\, \|\Im\,z\|^2<\lambda\}$ . It follows that for every polynomial $q$ there exists a generalized solution $s:\mathcal{D}(\bR^d)\to L^0(\Omega)$ of 
\begin{align}\label{poisson}
(-\lambda +\Delta)s=q(D)\dot{L}. 
\end{align}
\end{example}
\begin{example}
Let $p(D)=\prod_{j=1}^d (\lambda_j-\partial_{x_j})^{\alpha_j}$, $\alpha_j\in\bN_0$ for all $j\in\{1,\cdots,d\}$ and $|\lambda_j|>0$. Then its corresponding polynomial is given by $p(iz)=\prod_{j=1}^d (\lambda_j-i z_j)^{\alpha_j}$ and by Theorem \ref{theoremcarma} we find a generalized solution of the equation $p(D)s=q(D)\dot{L}$ for every partial differential operator $q(D)$, as $1/p(i\cdot)$ is holomorphic in $\{z\in \bC^d: \|\Im z\|< \varepsilon\}$ for some $\varepsilon>0$.
\end{example}
\section{CARMA random fields}\label{sectionCRF}
Until now we have only studied generalized solutions of the CARMA SPDE $(\ref{eqberger})$, but in the vast literature of stochastic partial differential equations driven by L\'{e}vy noise the concept of mild solutions seems to be more used, as the mild solution is itself a random field. We show under stronger conditions  the existence of a mild solution of $(\ref{eqberger})$. But first we recall what a mild solution is.
\begin{definition}[see {[\ref{Walsh}]}]
Let $p(D)$ and $q(D)$  be partial differential operators and let $G:\bR^d\to\bR$ be a locally integrable fundamental solution of the equation $p(D)u=q(D)\delta_0$, which means that for every $\varphi\in\mathcal{D}(\bR^d)$, $p(D)G\ast\varphi=q(D)\varphi$. We say that $X=(X_t)_{t\in\bR^d}$ defined by 
\begin{align*}
X_t=\int\limits_{\bR^d}G(t-s)\,dL(s),
\end{align*}
where $dL$ denotes a L\'{e}vy basis, is the mild solution of the equation $p(D)X=q(D)dL$, provided that the integral exists. Observe that it is necessary that $G$ is a function.
\end{definition}
We know already that $\dot{L}$ can be extended to a L\'{e}vy basis, see  [\ref{Fageot}]. We state our first result, which follows directly from the proofs  of Theorem \ref{theorem1} and Corollary \ref{cor3.9}.
\begin{proposition}\label{proposition2}\mbox{}
\begin{itemize}\item[i)]
Let $G:\bR^d\to\bR$ be a measurable function with $G\in L^1(\bR^d)\cap L^2(\bR^d)$. We define $$h(x):=x\int\limits_{0}^{1/x} d_{G}(a)\lambda^1(da)\textrm{ for }x>0.$$ Let $L$ be a L\'{e}vy basis (equivalently $\dot{L}$ a L\'{e}vy white noise) with characteristic triplet $(a,\gamma,\nu)$, and assume that 
\begin{align*}
\int\limits_{\bR} \one_{|r|>1} h(|r|)\nu(dr)<\infty.
\end{align*} 
Then the integral $$X_t=\int_{\bR^d}G(t-s)dL(s)$$ exists and defines a stationary random field $(X_t)_{t\in\bR^d}$.
\item[ii)] Conversely, if $G:\bR^d\to\bR$ is measurable and the integral $\int\limits_{\bR^d}G(-s)dL(s)$ exists, then necessarily
\begin{align*}
\int\limits_{\bR} \one_{|r|>1} d_{G}(1/|r|) \nu(dr)<\infty.
\end{align*}
\end{itemize}
\end{proposition}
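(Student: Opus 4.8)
The plan is to reduce Proposition \ref{proposition2} directly to the two ingredients already established in the excerpt, namely Theorem \ref{theorem1} and Corollary \ref{cor3.9}, by observing that the existence of the stochastic integral $\int_{\bR^d} G(t-s)\,dL(s)$ is exactly the statement that the translated kernel $G(t-\cdot)$ belongs to the maximal domain $L(\dot L)$ of the L\'evy white noise. Since the L\'evy basis and L\'evy white noise are identified in the canonical way via $\langle \dot L,\varphi\rangle = \int_{\bR^d}\varphi(x)\,dL(x)$, integrability of $G(t-\cdot)$ against $L$ in the sense of Rajput--Rosinski is equivalent to $G(t-\cdot)\in L(\dot L)$, so the whole proposition is about applying the integrability criterion of [\ref{Rajput}, Theorem 2.7] to the single function $G$ rather than to convolutions $G\ast\varphi$.

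For part i), first I would note that with $G\in L^1(\bR^d)\cap L^2(\bR^d)$ the three finiteness conditions from [\ref{Rajput}, Theorem 2.7] that guarantee $G\in L(\dot L)$ are precisely the finite versions of the quantities $(\ref{ha})$, $(\ref{haha})$, $(\ref{hahaha})$ appearing in the proof of Theorem \ref{theorem1}, but now applied to $G$ itself. The Gaussian term is controlled by $\|G\|_{L^2}^2<\infty$; the large-jump term is handled using the layer-cake identity [\ref{Grafakos}, Exercise 1.1.10] together with the definition $h(x)=x\int_0^{1/x} d_G(\alpha)\,\lambda^1(d\alpha)$ and the hypothesis $\int_{\bR}\one_{|r|>1}h(|r|)\,\nu(dr)<\infty$; and the small-jump part is bounded by $\|G\|_{L^2}^2\int_{|r|\le1} r^2\,\nu(dr)<\infty$. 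This is the same computation as in Theorem \ref{theorem1} with $\varphi_n\ast G$ replaced by $G$ and $G_R$ replaced by $G$, so I would simply cite that proof. Since $G\in L^1\cap L^2$ both norms are finite, so $d_G$ plays the role that $d_{G_R}$ played there. Finally, stationarity of $(X_t)$ follows because translating $t$ translates the integrand and $L$ is translation invariant in law, exactly as in the stationarity argument closing the proof of Theorem \ref{theorem1}.

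For part ii) I would invoke Corollary \ref{cor3.9}, or rather the computation $(\ref{whynot})$ inside its proof: the necessary condition for $G(-\cdot)\in L(\dot L)$ coming from [\ref{Rajput}, Theorem 2.7] is that
\begin{align*}
\infty > \int\limits_{\bR}\int\limits_{\bR^d} \bigl(1\wedge (rG(-x))^2\bigr)\,\lambda^d(dx)\,\nu(dr) \ge \int\limits_{\bR}\one_{|r|>1}\, d_G(1/|r|)\,\nu(dr),
\end{align*}
where the inequality uses $1\wedge(rG)^2 \ge \one_{|r|>1}\one_{|rG|>1}$ and then the layer-cake identity to rewrite $\int_{\bR^d}\one_{|rG(-x)|>1}\,\lambda^d(dx)=d_G(1/|r|)$, noting that $d_{G(-\cdot)}=d_G$ since translation and reflection preserve Lebesgue measure. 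This gives the claim without any positivity hypothesis on $G$, because the lower bound on $1\wedge(rG)^2$ holds pointwise regardless of sign.

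The main obstacle, such as it is, is bookkeeping rather than conceptual: one must be careful that the identification of the stochastic integral $\int G(t-s)\,dL(s)$ with the pairing $\langle\dot L, G(t-\cdot)\rangle$ is legitimate, i.e. that membership of $G$ in $L(\dot L)$ is genuinely the same as Rajput--Rosinski integrability of the kernel, which is exactly the content of [\ref{Fageot}, Theorem 3.5, Theorem 3.7] and the discussion following Definition \ref{randommeasures}. Once that identification is in place, both statements are immediate specializations ($\varphi_n\ast G\rightsquigarrow G$, $G_R\rightsquigarrow G$) of the already-proven Theorem \ref{theorem1} and Corollary \ref{cor3.9}, and no new estimate is required.
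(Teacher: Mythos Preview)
Your proposal is correct and follows essentially the same route as the paper: reduce to the three Rajput--Rosinski integrability conditions, verify them for part (i) by the same estimates as in the proof of Theorem~\ref{theorem1} (with $G$ in place of $\varphi_n\ast G$ and $d_G$ in place of $d_{G_R}$), and deduce part (ii) directly from the inequality $(\ref{whynot})$ in the proof of Corollary~\ref{cor3.9}. Your explicit remark that the positivity hypothesis of Corollary~\ref{cor3.9} is not needed here---because one only uses the pointwise bound $1\wedge(rG)^2\ge \one_{|r|>1}\one_{|rG|>1}$---is a helpful clarification that the paper leaves implicit.
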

\begin{proof}
By [\ref{Rajput}, Theorem 2.7], the integral $\int\limits_{\bR^d}G(t-s)dL(s)$ exists if and only if
\begin{align*}
&\int\limits_{\bR^d} \left( \gamma G(x)+\int\limits_{\bR} rG(x)(\one_{|rG(x)|\le 1}-\one_{|r|\le 1})\nu(dr)\right)\lambda^d(dx)<\infty,\\
&\int\limits_{\bR^d}\int\limits_{\bR}1\wedge (rG(x))^2\nu(dr)\lambda^d(dx)<\infty\textrm{ and}\\
&\int\limits_{\bR^d} a|G(x)|^2\lambda^d(dx)<\infty.
\end{align*}
That the conditions specified in (i) are sufficient then follows by calculations similar to those in the proof of Theorem \ref{theorem1}, while necessity of the condition specified in (ii) follows as in (\ref{whynot}). That $X_t$ as defined in (i) is stationary is clear. 
\end{proof}
Now we conclude that there exists a mild solution of the CARMA$(p,q)$ SPDE under some further restrictions.
\begin{theorem}\label{cor1}
Let $dL$ be a L\'{e}vy basis in $\bR^d$ with characteristic triplet $(a,\gamma,\nu)$ such that $\int\limits_{\bR} \one_{|r|>1}\log(|r|)^d \nu(dr)<\infty$. Assume furthermore that there exists $\varepsilon>0$ such that \begin{align}\sup_{\eta\in B_{\varepsilon}(0)}\left\|\frac{q(i\cdot+\eta)}{p(i\cdot+\eta)}\right\|_{L^2}<\infty .\end{align} Then there exists a mild solution of the equation
\begin{align}\label{carma100}
p(D)X=q(D)\,dL,
\end{align}
which is given by
\begin{align}\label{mild}
X_t=\int\limits_{\bR^d} \mathcal{F}^{-1} \left(\frac{q(i\cdot)}{p(i\cdot)}\right)(t-x)dL(x),\,t\in\bR^d.
\end{align}
\end{theorem}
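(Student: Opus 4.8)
The plan is to read off the kernel $G:=\mathcal{F}^{-1}\!\big(q(i\cdot)/p(i\cdot)\big)$ from the stated formula and then to establish two separate things: that the stochastic integral $X_t=\int_{\bR^d}G(t-x)\,dL(x)$ is well defined (so that $X$ is genuinely a random field), and that $G$ is a fundamental solution in the sense required by the definition of a mild solution, i.e. $p(D)(G\ast\varphi)=q(D)\varphi$ for every $\varphi\in\mathcal{D}(\bR^d)$. The second point is essentially free and I would dispatch it first: once $G\in L^1(\bR^d)$ is known (established below) we have $\mathcal{F}G=q(i\cdot)/p(i\cdot)$, so $\mathcal{F}\big(G\ast(p(D)\varphi)\big)=\frac{q(i\cdot)}{p(i\cdot)}\,p(i\cdot)\,\mathcal{F}\varphi=q(i\cdot)\mathcal{F}\varphi=\mathcal{F}(q(D)\varphi)$, and applying $\mathcal{F}^{-1}$ together with $p(D)(G\ast\varphi)=G\ast(p(D)\varphi)$ gives the fundamental-solution identity. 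Hence the real content of the theorem is the existence of the integral, which I would deduce from Proposition \ref{proposition2}(i).

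To apply Proposition \ref{proposition2}(i) I must first produce $G\in L^1(\bR^d)\cap L^2(\bR^d)$ together with a decay estimate, and this is exactly the setting of the second example in Example \ref{ichhabekeineahnung}. The hypothesis $\sup_{\eta\in B_\varepsilon(0)}\|q(i\cdot+\eta)/p(i\cdot+\eta)\|_{L^2}<\infty$ says that $q(i\cdot)/p(i\cdot)$ is holomorphic and uniformly square-integrable on the shifted contours, so by the same Paley--Wiener argument used in Theorem \ref{theoremcarma} (via [\ref{Reed}, Theorem XI.13]) its inverse Fourier transform $G$ satisfies $e^{c\|x\|}G(x)\in L^2(\bR^d)$ for some $0<c<\varepsilon$; moreover $G$ is real-valued, since $q,p$ are real polynomials and therefore $q(i\cdot)/p(i\cdot)$ has the Hermitian symmetry $f(-\xi)=\overline{f(\xi)}$, exactly as argued in Theorem \ref{theoremcarma}. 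Then Hölder's inequality against $e^{-c\|\cdot\|}\in L^2(\bR^d)$ gives $G\in L^1(\bR^d)$, and $e^{c\|\cdot\|}\ge 1$ gives $G\in L^2(\bR^d)$.

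The remaining and most delicate step is the integrability condition $\int_{\bR}\one_{|r|>1}h(|r|)\,\nu(dr)<\infty$ of Proposition \ref{proposition2}(i), with $h(x)=x\int_0^{1/x}d_G(\alpha)\,\lambda^1(d\alpha)$. The obstacle here is that, unlike in Example \ref{ichhabekeineahnung} where the averaged kernel $G_R$ acquires a genuine pointwise exponential bound via Cauchy--Schwarz, the Paley--Wiener theorem yields only $L^2$ exponential decay of $G$ itself, so I cannot simply read off $d_G$ from a pointwise bound. I would instead estimate $d_G$ by a level-set splitting: for $0<\alpha<1$ and any $R>0$, using the volume bound inside $B_R(0)$ and the exponential weight outside it,
\[
d_G(\alpha)\le \lambda^d(B_R(0))+\alpha^{-2}e^{-2cR}\,\|e^{c\|\cdot\|}G\|_{L^2}^2,
\]
and then choosing $R=\tfrac1c\log(1/\alpha)$ gives $d_G(\alpha)\le C_1(\log(1/\alpha))^d+C_2$. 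Substituting $\alpha=e^{-t}$ and invoking the upper incomplete gamma identity precisely as in Example \ref{ichhabekeineahnung} yields $h(|r|)\le C_3\big(1+(\log|r|)^d\big)$ for $|r|>1$, so that $\int_{\bR}\one_{|r|>1}h(|r|)\,\nu(dr)\le C_3\big(\nu(\{|r|>1\})+\int_{\bR}\one_{|r|>1}(\log|r|)^d\,\nu(dr)\big)<\infty$ by the moment hypothesis and the finiteness of $\nu$ on $\{|r|>1\}$. Proposition \ref{proposition2}(i) then furnishes the integral $X_t$ and its stationarity, which together with the fundamental-solution identity completes the proof. The one step that requires genuine care is this distribution-function bound for $G$; everything else is a routine Fourier computation or a direct citation.
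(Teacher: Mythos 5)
Your proposal is correct and follows essentially the same route as the paper: Paley--Wiener gives $e^{c\|\cdot\|}G\in L^{2}(\bR^d)$ and real-valuedness, the fundamental-solution identity is the same Fourier computation, and existence of the integral is reduced to Proposition \ref{proposition2}(i) via a $(\log(1/\alpha))^{d}$ bound on $d_G$ and the incomplete-gamma calculation of Example \ref{ichhabekeineahnung}. Your ball-splitting with the optimized radius $R=c^{-1}\log(1/\alpha)$ is the paper's level-set inclusion $\{|G|>\alpha\}\subseteq\{|Ge^{c\|\cdot\|}|>1\}\cup\{e^{-c\|\cdot\|}>\alpha\}$ in disguise, since $\{e^{-c\|\cdot\|}>\alpha\}=B_R(0)$ for exactly that $R$ and Chebyshev gives the same constant for the exterior piece.
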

\begin{proof}
Taking Fourier transforms, it is easy to check that $G:=\mathcal{F}^{-1}\frac{q(i\cdot)}{p(i\cdot)}$ is a fundamental solution of $p(D)u=q(D)\delta_0$. By [\ref{Reed}, Theorem XI.13, p.18] we see that $e^{c\|\cdot\|}G\in L^2(\bR^d)$ for all $0<c<\varepsilon$ and $G$ is real-valued by the same argument as in Theorem \ref{theoremcarma}. It follows that
\begin{align*}
h(r)=r\int\limits_{0}^{1/r} d_G(\alpha)\lambda^1(d\alpha)\le d_{G\exp(c\|\cdot\|)}(1)+r\int\limits_{0}^{1/r}d_{\exp(-c\|\cdot\|)}(\alpha)\lambda^1(d\alpha)
\end{align*}
The rest follows by Proposition \ref{proposition2} and similar calculations as in Example \ref{ichhabekeineahnung}.
\end{proof}
\begin{example}
Let $d=1,2,3$,  $\lambda>0$ and $p(D)=(\lambda-\Delta)$. We see that $$\sup_{\|\eta\|\le \lambda/2}\|1/p(i\cdot+\eta)\|_{L^2(\bR^d)}<\infty$$and by Theorem \ref{cor1} we conclude that there exists a mild solution of the equation $(\lambda-\Delta)X=dL$.
\end{example}
\begin{example}
The causal CARMA random field constructed in [\ref{Pham}, Definition 3.3] and [\ref{Kluppelberg}, Definition 2.1]  is the mild solution of the equation $P(D)X=Q(D)dL$, where $P$ and $Q$ are given in [\ref{Kluppelberg}, Proposition 2.5]. We observe that $P$ and $Q$ satisfy the assumption of Theorem \ref{cor1}, so that the causal CARMA random field of [\ref{Pham},\ref{Kluppelberg}] can be seen as a special case of CARMA random fields defined in the present paper. 
\end{example}
In classical analysis, a locally integrable function $f:\bR^d\to\bR$ specifies a distribution $T_f$ by $T_f(\varphi):=\int\limits_{\bR^d}f(x)\varphi(x)\lambda^d(dx)$ for $\varphi\in\mathcal{D}(\bR^d)$. It is now natural to ask if a mild solution $X$ of $p(D)X=q(D)dL$ also gives rise to a generalized solution of $p(D)X=q(D)\dot{L}$ via $\langle X,\varphi\rangle:=\int\limits_{\bR^d} X_s\varphi(s)\lambda^d(ds)$.\\
That this indeed the case, at least under some weak conditions which allow the application of a stochastic Fubini theorem, is the contents of the next proposition.
\begin{proposition}\label{prop300}
Let $dL$ be a L\'{e}vy basis with existing first moment and $p$ and $q$ be as in Theorem \ref{cor1}. Let 
\begin{align*}
G:=\mathcal{F}^{-1}\left(\frac{q(i\cdot)}{p(i\cdot)}\right).
\end{align*}
Then the mild solution
\begin{align*}
X_s=\int\limits_{\bR^d} G(s-u)dL(u),\,s\in\bR^d,
\end{align*}
of (\ref{mild}) gives rise to a generalized solution $X$ of the SPDE $p(D)X=q(D)\dot{L}$ via
\begin{align*}
\langle X,\varphi\rangle:=\int\limits_{\bR^d} X_s\varphi(s)\lambda^d(ds),\,\varphi\in\mathcal{D}(\bR^d).
\end{align*}
%Let $\dot{L}$ be a L\'{e}vy white noise with existing first moment and $G\in L^1(\bR^d)\cap L^2(\bR^d)$ be a fundamental solution of the equation $p(D)u=q(D)\delta_0$ which is integrable with respect to the L\'{e}vy white noise. Then the mild solution
%\begin{align*}
%X_s=\int\limits_{\bR^d}G(s-u)dL(u)
%\end{align*}
%admits a generalized solution in the sense that for every $\varphi\in\mathcal{D}(\bR^d)$
%\begin{align*}
%\langle X,p(D)^*\varphi\rangle:=\int\limits_{\bR^d}X_s\,p(D)^*\varphi(s)\lambda^d(ds)=\int\limits_{\bR^d}q(D)^*\varphi(s)dL(s)\textrm{  a.s.}
%\end{align*}
\end{proposition}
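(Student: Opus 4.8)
The plan is to realise $X$ as a generalized process and then verify the identity (\ref{spde}) by a stochastic Fubini argument, after which the SPDE collapses to the defining property of the fundamental solution $G$.

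First I would check that $\langle X,\varphi\rangle:=\int_{\bR^d}X_s\varphi(s)\lambda^d(ds)$ is a well-defined element of $L^0(\Omega)$ for every $\varphi\in\mathcal{D}(\bR^d)$ and that $\varphi\mapsto\langle X,\varphi\rangle$ is linear and continuous. Since $e^{c\|\cdot\|}G\in L^2(\bR^d)$ (as in Theorem \ref{cor1}) we have $G\in L^1(\bR^d)\cap L^2(\bR^d)$, and because $dL$ has an existing first moment the mild solution $X$ is stationary with $\bE|X_0|<\infty$; I would fix a jointly measurable version of $(s,\omega)\mapsto X_s(\omega)$, which exists since $s\mapsto X_s$ is continuous in $L^1$ by stationarity and $G\in L^1$. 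Then $\bE\int_{\bR^d}|X_s|\,|\varphi(s)|\lambda^d(ds)=\bE|X_0|\,\|\varphi\|_{L^1}<\infty$, so $\langle X,\varphi\rangle$ is almost surely finite, and the bound $\bE|\langle X,\varphi_n-\varphi\rangle|\le \bE|X_0|\,\|\varphi_n-\varphi\|_{L^1}$ yields continuity from $\mathcal{D}(\bR^d)$ into $L^0(\Omega)$. Thus $X$ is a generalized process.

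The core step is to interchange the deterministic and the stochastic integral. Writing out $\langle X,p(D)^*\varphi\rangle=\int_{\bR^d}\big(\int_{\bR^d}G(s-u)dL(u)\big)(p(D)^*\varphi)(s)\lambda^d(ds)$, I would invoke a stochastic Fubini theorem for integrals against a L\'evy basis (in the spirit of [\ref{Walsh}]) to obtain
\begin{align*}
\langle X,p(D)^*\varphi\rangle=\int_{\bR^d}\Big(\int_{\bR^d}G(s-u)(p(D)^*\varphi)(s)\lambda^d(ds)\Big)dL(u).
\end{align*}
The assumption that $dL$ has an existing first moment, together with $G\in L^1(\bR^d)$ and the compact support of $p(D)^*\varphi$, is precisely what dominates the integrand and licenses this interchange: concretely one checks the joint integrability $\int_{\bR^d}|(p(D)^*\varphi)(s)|\big(\int_{\bR^d}|G(s-u)|\,m(du)\big)\lambda^d(ds)=\|G\|_{L^1}\|p(D)^*\varphi\|_{L^1}\cdot(\text{const})<\infty$ against the mean measure $m$ of $dL$, and an analogous control of the small- and large-jump parts using the conditions of Theorem \ref{cor1}. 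Next I would identify the inner integral: for fixed $u$, viewing $s\mapsto G(s-u)$ as the locally integrable distribution $T_{G(\cdot-u)}$ and using $p(D)^*=p(-D)$, integration by parts gives $\int_{\bR^d}G(s-u)(p(-D)\varphi)(s)\lambda^d(ds)=\langle p(D)T_{G(\cdot-u)},\varphi\rangle$. Since $p(D)G=q(D)\delta_0$ by the fundamental-solution property, we have $p(D)T_{G(\cdot-u)}=q(D)\delta_u$, whence the inner integral equals $\langle q(D)\delta_u,\varphi\rangle=(q(-D)\varphi)(u)=(q(D)^*\varphi)(u)$. Substituting back yields $\langle X,p(D)^*\varphi\rangle=\int_{\bR^d}(q(D)^*\varphi)(u)\,dL(u)=\langle\dot L,q(D)^*\varphi\rangle$, which is exactly (\ref{spde}).

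I expect the main obstacle to be the rigorous justification of the stochastic Fubini theorem under only a first-moment hypothesis: one must simultaneously control the Gaussian, drift, and jump components of the L\'evy basis and confirm that the measurable version of $X$ renders the double integral well defined (for instance by reducing to simple functions $\varphi$ and passing to the limit in $L^0(\Omega)$, using that $G\ast\varphi_n\to G\ast\varphi$ in $L(\dot L)$ via the estimates behind Theorem \ref{cor1}). The deterministic reduction via the fundamental-solution identity, by contrast, is a routine distributional computation once the interchange is in place.
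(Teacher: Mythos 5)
Your proposal is correct and follows essentially the same route as the paper: verify integrability, apply a stochastic Fubini theorem to swap the deterministic and stochastic integrals, and then reduce the inner integral via the fundamental-solution identity $p(D)G=q(D)\delta_0$. The only substantive difference is that the paper makes the Fubini hypotheses explicit through the pointwise bound $|rf|\wedge|rf|^2\le \one_{|r|>1}|rf|+\one_{|r|\le 1}|rf|^2$ combined with Young's inequality and the first-moment condition $\int_{|r|>1}|r|\,\nu(dr)<\infty$ (citing Barndorff-Nielsen and Basse-O'Connor), whereas you describe this verification only in outline; filling in that bound is exactly the "main obstacle" you correctly anticipate.
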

\begin{proof}
Observe that $G\in L^1(\bR^d)\cap L^2(\bR^d)$ by the proof of Theorem \ref{cor1}. We see that for every $\varphi\in\mathcal{D}(\bR^d)$
\begin{align*}
&|r\varphi(t)G(t-s)|\wedge |r\varphi(t)G(t-s)|^2\\
=&\one_{|r\varphi(t)G(t-s)|>1}|r\varphi(t)G(t-s)|+\one_{|r\varphi(t)G(t-s)|\le 1}|r\varphi(t)G(t-s)|^2\\
\le&\one_{|r|>1}\one_{|r\varphi(t)G(t-s)|>1}|r\varphi(t)G(t-s)|+\one_{|r|\le1}\one_{|r\varphi(t)G(t-s)|>1}|r\varphi(t)G(t-s)|^2\\
&+\one_{|r|\le 1}\one_{|r\varphi(t)G(t-s)|\le 1}|r\varphi(t)G(t-s)|^2+\one_{|r|> 1}\one_{|r\varphi(t)G(t-s)|\le 1}|r\varphi(t)G(t-s)|\\
= &\one_{|r|>1}|r\varphi(t)G(t-s)|+\one_{|r|\le1}|r\varphi(t)G(t-s)|^2.
\end{align*}
Since
\begin{align*}
\int\limits_{\bR^d}\int\limits_{\bR^d}\int\limits_{\bR}\one_{|r|>1}|r\varphi(t)G(t-s)|\nu(dr)\lambda^d(ds)\lambda^d(dt)&\le\int\limits_{\bR}\one_{|r|>1}|r|\nu(dr)\| |\varphi|\ast|G|\|_{L^1}<\infty\textrm{ and }\\
\int\limits_{\bR^d}\int\limits_{\bR^d}\int\limits_{\bR}\one_{|r|\le 1} |r\varphi(t)G(t-s)|^2\nu(dr)\lambda^d(ds)\lambda^d(dt)&\le\int\limits_{\bR}\one_{|r|\le 1}|r|^2\nu(dr)\| |\varphi|^2\ast|G|^2\|_{L^1}<\infty
\end{align*}
by Young's inequality and by assumption we conclude from a stochastic Fubini result ([\ref{Nielsen}, Theorem 3.1 and Remark 3.2, p. 926]; observe that $\varphi$ has compact support and that $\lambda^d$ is finite on the support of $\varphi$) that
\begin{align*}
\langle X,\varphi\rangle:&=\int\limits_{\bR^d} X_s\varphi(s)\lambda^d(ds)=\int\limits_{\bR^d} \int\limits_{\bR^d} G(s-t)\varphi(s) dL(t) \lambda^d(ds)\\
&\stackrel{a.s.}{=} \int\limits_{\bR^d} \int\limits_{\bR^d} G(s-t)\varphi(s)\lambda^d(ds)dL(t)
\end{align*}
(from the discussions preceeding Theorem 3.1 in [\ref{Nielsen}] it follows also that a version of $X_s$ can be chosen such that $X_s\varphi(s)$ is integrable with respect to $\lambda^d$). Further, $X:\mathcal{D}(\bR^d)\to L^0$ is clearly linear and estimates as above show that it is also continuous, hence $X$ is a generalized random process. To see that $p(D)X=q(D)\dot{L}$, observe that 
\begin{align*}
\langle X,p(D)^*\varphi\rangle&=\int\limits_{\bR^d} \int\limits_{\bR^d} G(s-t)p(D)^*\varphi(s)\lambda^d(ds)dL(t)\\
&=\int\limits_{\bR^d} (G(-\cdot)\ast p(D)^*\varphi)(t)dL(t)\\
&=\int\limits_{\bR^d} (p(D)^* G(-\cdot)\ast\varphi)(t)dL(t)\\
&=\int\limits_{\bR^d}q(D)^*\varphi(t) dL(t)\\
&=\langle \dot{L}, q(D)^* \varphi\rangle,
\end{align*}
where we used in the last equality but one that $G(-\cdot)$ is the fundamental solution of $p(-D)u=q(-D)\delta_0$. It follows that $X$ is a generalized solution of the SPDE $p(D)X=q(D)\dot{L}$.
%By using [\ref{Nielsen}, Theorem 3.1, p. 926] we conclude that
%\begin{align*}
%\int\limits_{\bR^d} X_sp(D)^*\varphi(s)\lambda^d(ds)&=\int\limits_{\bR^d}\int\limits_{\bR^d} G(s-t)p(D)^*\varphi(s)dL(t)\lambda^d(s)\\
%&\stackrel{a.s.}{=}\int\limits_{\bR^d}\int\limits_{\bR^d} G(s-t)p(D)^*\varphi(s)\lambda^d(s)dL(t)\\
%&=\int\limits_{\bR^d}G(-\cdot)\ast p(D)^*\varphi (t)dL(t)=\int\limits_{\bR^d} q(D)^*\varphi (t)dL(t).
%\end{align*}
%So we see that $X$ admits a generalized solution.
\end{proof}

\section{Moment properties}\label{section3} We say that a generalized process $s:\mathcal{D}\to L^0(\Omega)$ has existing $\beta$-moment, $\beta>0$, if  $\bE|\langle s,\varphi\rangle|^{\beta}<\infty$ for every $\varphi\in\mathcal{D}(\bR^d)$.\\
Let $\dot{L}$ be a L\'{e}vy white noise with characteristic triplet $(a,\gamma,\nu)$. Then it is easy to see (cf. [\ref{Sato}, Theorem 25.3, p. 159]) that $\dot{L}$ has existing $\beta$-moment if and only if
\begin{align*}
\int\limits_{|z|>1} |z|^{\beta}\nu(dz)<\infty.
\end{align*}
Next we show that if $\dot{L}$ has existing $\beta$-moment then so has the CARMA generalized process given in Theorem \ref{theoremcarma}.
\begin{proposition}\label{propositionmoment}
Let $\dot{L}$ have existing $\beta$-moment ($\beta>0$) and let $p$ and $q$ be polynomials satisfying Assumption $\ref{ass1}$. Then the stationary CARMA$(p,q)$ generalized process $s$ constructed in Theorem \ref{theoremcarma} has existing $\beta$-moment, too.
\end{proposition}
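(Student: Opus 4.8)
The plan is to reduce the assertion to the classical moment criterion for infinitely divisible laws and then exploit the strong integrability of the kernel produced in Theorem~\ref{theoremcarma}. Recall from that construction that for every $\varphi\in\mathcal{D}(\bR^d)$,
$$\langle s,\varphi\rangle=\langle\dot{L},f_\varphi\rangle,\qquad f_\varphi:=G\ast\mathcal{F}^{-1}(\psi(\cdot)\mathcal{F}\varphi)=G\ast\tilde\varphi,\quad \tilde\varphi:=(1-\Delta)^{\alpha}\varphi\in\mathcal{D}(\bR^d),$$
where $G=\mathcal{F}^{-1}\big(q(-i\cdot)/(\psi(\cdot)p(-i\cdot))\big)$ satisfies $e^{c\|\cdot\|}G\in L^2(\bR^d)$ for some $c>0$. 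Since $\dot L$ has existing $\beta$-moment we have $\int_{|z|>1}|z|^{\beta}\nu(dz)<\infty$; because $|z|^{\beta}$ dominates $\log(|z|)^d$ for large $|z|$, the hypotheses of Theorem~\ref{theoremcarma} hold and $s$ is well defined.

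First I would record the integrability of $f_\varphi$. As $G\in L^2(\bR^d)$ and $\tilde\varphi\in L^2(\bR^d)$, Young's inequality gives $f_\varphi\in L^\infty(\bR^d)$; set $M:=\|f_\varphi\|_{L^\infty}$. For the decay, let $\supp\tilde\varphi\subset B_\rho(0)$; then for large $\|x\|$, using $\|x-y\|\ge\|x\|-\rho$ and Cauchy--Schwarz,
$$|f_\varphi(x)|\le\|\tilde\varphi\|_{L^\infty}\int_{B_\rho(0)}|G(x-y)|\lambda^d(dy)\le\|\tilde\varphi\|_{L^\infty}\,\|e^{c\|\cdot\|}G\|_{L^2}\left(\int_{B_\rho(0)}e^{-2c\|x-y\|}\lambda^d(dy)\right)^{1/2}\le C e^{-c\|x\|}.$$
Hence $f_\varphi$ is bounded and decays exponentially, so $f_\varphi\in L^\beta(\bR^d)$ for every $\beta>0$.

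Next I would invoke the structure of the stochastic integral. By [\ref{Rajput}, Theorem 2.7] the random variable $\langle s,\varphi\rangle=\int_{\bR^d}f_\varphi\,dL$ is infinitely divisible, and its L\'evy measure $\nu_{f_\varphi}$ is the image of $\nu\otimes\lambda^d$ under $(r,x)\mapsto rf_\varphi(x)$. By the moment criterion [\ref{Sato}, Theorem 25.3] recalled at the start of this section, $\bE|\langle s,\varphi\rangle|^\beta<\infty$ is equivalent to
$$\int_{|z|>1}|z|^\beta\,\nu_{f_\varphi}(dz)=\int_{\bR^d}\int_{\bR}\one_{|rf_\varphi(x)|>1}\,|rf_\varphi(x)|^\beta\,\nu(dr)\,\lambda^d(dx)<\infty.$$

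Finally I would split this double integral over $\{|r|>1\}$ and $\{|r|\le1\}$. On $\{|r|>1\}$, dropping the indicator and applying Tonelli yields the bound $\|f_\varphi\|_{L^\beta}^\beta\int_{|r|>1}|r|^\beta\nu(dr)$, finite by the $\beta$-moment hypothesis together with $f_\varphi\in L^\beta$. On $\{|r|\le1\}$ the indicator forces $|f_\varphi(x)|>1/|r|\ge1$; as $f_\varphi$ is bounded by $M$, this region is empty unless $|r|>1/M$, so the contribution is at most $\|f_\varphi\|_{L^\beta}^\beta\,\nu(\{1/M<|r|\le1\})<\infty$, the $\nu$-mass being finite since it is bounded away from the origin. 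Summing the two pieces gives the claim. The main obstacle is precisely this small-jump regime $\{|r|\le1\}$, where $\nu$ may carry infinite mass near $0$: the estimate works only because the boundedness of $f_\varphi$ confines the relevant jump sizes to a region on which $\nu$ is finite; the other point requiring care is the exponential decay of $f_\varphi$ underlying $f_\varphi\in L^\beta$.
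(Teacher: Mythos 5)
Your proof is correct, and its skeleton coincides with the paper's: both identify the L\'evy measure of $\langle s,\varphi\rangle=\langle\dot L,G\ast(1-\Delta)^{\alpha}\varphi\rangle$ via [Rajput--Rosinski, Theorem 2.7], invoke the moment criterion $\int_{|z|>1}|z|^{\beta}\nu_{s(\varphi)}(dz)<\infty$, and split the resulting double integral over $\{|r|>1\}$ and $\{|r|\le 1\}$. Where you genuinely diverge is in the two technical estimates. For the large-jump part the paper proves $G\ast\tilde\varphi\in L^{\beta}$ by a case distinction: Young's inequality for $\beta\ge 1$ (using only $G\in L^1$), and a separate weighted H\"older argument for $0<\beta<1$ that exploits $e^{b\|\cdot\|}G\in L^2$. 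You instead derive a single pointwise bound $|f_\varphi(x)|\le Ce^{-c\|x\|}$ from Cauchy--Schwarz against $e^{c\|\cdot\|}G\in L^2$ and the compact support of $\tilde\varphi$, which yields $f_\varphi\in L^{\beta}$ for every $\beta>0$ at once; this is cleaner and is the same mechanism the paper's $\beta<1$ case secretly relies on. For the small-jump part the paper again splits cases ($\beta\le 2$ via the bound $|rf_\varphi(x)|^{\beta}\le|rf_\varphi(x)|^2$ on the set $\{|rf_\varphi(x)|>1\}$, and $\beta\ge 2$ via $|r|^{\beta}\le|r|^2$), whereas you observe that boundedness of $f_\varphi$ by $M$ forces $|r|>1/M$ on the relevant set, so the contribution is controlled by $\|f_\varphi\|_{L^{\beta}}^{\beta}\,\nu(\{1/M<|r|\le 1\})$, which is finite because $\nu$ has finite mass away from the origin. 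Your treatment is uniform in $\beta$ and arguably more transparent about why the small jumps are harmless; the paper's quadratic bound has the minor advantage of not needing $f_\varphi\in L^{\infty}$, only $f_\varphi\in L^2$. Both arguments are complete and correct.
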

\begin{proof}
Let $\varphi\in \mathcal{D}(\bR^d)$. From (\ref{constructed process}) and [\ref{Rajput}, Theorem 2.7] we see that the L\'{e}vy measure of the  random variable $s(\varphi)$ is given by
\begin{align*}
\nu_{s(\varphi)}(B)=\int\limits_{\bR^d}\int\limits_{\bR} \one_{B\setminus\{0\}}(rG\ast (1-\Delta)^{\alpha}\varphi(x))\nu(dr)\lambda^d(dx),
\end{align*}
where $G$ and $\alpha$ are defined as in the proof of Theorem $\ref{theoremcarma}$. We conclude 
\begin{align}\label{integral1}
\int\limits_{|z|>1} |z|^{\beta}\nu_{s(\varphi)}(dz)
\nonumber=&\int\limits_{\bR} |r|^{\beta}\int\limits_{|(G\ast (1-\Delta)^{\alpha}\varphi)(x)|>\frac{1}{|r|}} |G\ast(1-\Delta)^{\alpha}\varphi(x)|^{\beta}\lambda^d(dx)\nu(dr)\\
=&\int\limits_{|r|\le 1} |r|^{\beta}\int\limits_{|(G\ast (1-\Delta)^{\alpha}\varphi)(x)|>\frac{1}{|r|}} |G\ast(1-\Delta)^{\alpha}\varphi(x)|^{\beta}\lambda^d(dx)\nu(dr)\\
\nonumber&+\int\limits_{|r|>1} |r|^{\beta}\int\limits_{|(G\ast (1-\Delta)^{\alpha}\varphi)(x)|>\frac{1}{|r|}} |G\ast(1-\Delta)^{\alpha}\varphi(x)|^{\beta}\lambda^d(dx)\nu(dr).
\end{align}
For $\beta\ge 1$ we see by the Young inequality
\begin{align}\label{young}
\|G\ast(1-\Delta)^{\alpha}\varphi\|_{L^{\beta}}^{\beta}\le \|(1-\Delta)^{\alpha}\varphi\|^{\beta}_{L^{\beta}} \|G\|^{\beta}_{L^{1}}
\end{align}
and for $0<\beta<1$ we note that
\begin{align*}
&\int\limits_{\bR^d} |G\ast(1-\Delta)^{\alpha}\varphi(x)|^{\beta}\lambda^d(dx)\\
=&\int\limits_{\bR^d} |G\ast(1-\Delta)^{\alpha}\varphi(x)|^{\beta}\exp(-(b/4)\beta\|x\|)\exp((b/4)\beta\|x\|)\lambda^d(dx)\\
\le&C\left(\,\, \int\limits_{\bR^d} |G\ast(1-\Delta)^{\alpha}\varphi(x)\exp((b/4)\|x\|)| \lambda^d(dx) \right)^{\beta}\\
\le&C\left(\,\, \int\limits_{\bR^d}|(1-\Delta)^{\alpha}\varphi(y)|\exp((b/4)\|y\|)\int\limits_{\bR^d} |G(x)|\exp((b/4)\|x\|)\lambda^d(dx)\lambda^d(dy)\right)^{\beta}\\
\le&C'\|G\exp(b\|\cdot\|)\|_{L^2(\bR^d)}^{\beta}\left( \,\,\int\limits_{\bR^d}|(1-\Delta)^{\alpha}\varphi(y)|\exp((b/4)\|y\|)\lambda^d(dy)\right)^{\beta},
\end{align*}
where $b>0$ is chosen such that $\|G\exp(b\|\cdot\|)\|_{L^2}<\infty$ and $C$ and $C'$ are finite constants. 
From the previous calculations it is immediate that the term in (\ref{integral1}) corresponding to the integral when $|r|>1$ is finite for all $\beta>0$, and that the integral corresponding to the term $|r|\le 1$ is finite when $\beta\ge 2$. When $\beta\in (0,2]$ we estimate similar to (\ref{young})
\begin{align*}
&\int\limits_{|r|\le 1} |r|^{\beta}\int\limits_{|(G\ast (1-\Delta)^{\alpha}\varphi)(x)|>\frac{1}{|r|}} |G\ast(1-\Delta)^{\alpha}\varphi(x)|^{\beta}\lambda^d(dx)\nu(dr)\\
\le& \int\limits_{|r|\le 1} |r|^{2}\nu(dr)\|G\ast(1-\Delta)^{\alpha}\varphi(x)\|_{L^2(\bR^d)}^2<\infty.
\end{align*}
%By spitting the integral on the right hand side of (\ref{integral1}) into $|r|>1$ and $|r|\le 1$  one can use the above inequalities to show that $(\ref{integral1})$ is finite. 
We conclude that $\int\limits_{|z|>1} |z|^{\beta}\nu_{s(\varphi)}(dz)$ is finite for $\beta> 0$.
\end{proof}
By the same means we obtain the following.
\begin{proposition}\label{propmoment}
Let $X$ be the mild solution of a CARMA(p,q)-equation constructed in Theorem \ref{cor1}. If the $\beta-$moment of the L\'{e}vy-white noise exists for $0< \beta\le 2$, then $\bE |X_x|^{\beta}<\infty$ for every $x\in\bR^d$.
\end{proposition}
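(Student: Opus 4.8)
The plan is to mirror the proof of Proposition \ref{propositionmoment}, the only change being that the deterministic kernel is now a translate of $G$ rather than $G\ast(1-\Delta)^{\alpha}\varphi$. Write $X_x=\int_{\bR^d}G(x-u)\,dL(u)$ with $G=\mathcal{F}^{-1}(q(i\cdot)/p(i\cdot))$, and recall from the proof of Theorem \ref{cor1} that $G\in L^1(\bR^d)\cap L^2(\bR^d)$ and that $e^{c\|\cdot\|}G\in L^2(\bR^d)$ for some $c>0$. Since $X_x$ is infinitely divisible, by [\ref{Sato}, Theorem 25.3] its $\beta$-moment exists if and only if $\int_{|z|>1}|z|^{\beta}\nu_{X_x}(dz)<\infty$, where by [\ref{Rajput}, Theorem 2.7] the L\'{e}vy measure of $X_x$ is $\nu_{X_x}(B)=\int_{\bR^d}\int_{\bR}\one_{B\setminus\{0\}}(rG(x-u))\,\nu(dr)\,\lambda^d(du)$. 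Hence the task reduces to showing
\[
\int_{\bR}|r|^{\beta}\int_{\{|G(x-u)|>1/|r|\}}|G(x-u)|^{\beta}\,\lambda^d(du)\,\nu(dr)<\infty,
\]
which I would split into the regions $|r|>1$ and $|r|\le 1$.

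For the region $|r|>1$ the key point is that $G\in L^{\beta}(\bR^d)$ for every $\beta\in(0,2]$. For $\beta\in[1,2]$ this is immediate from $G\in L^1\cap L^2$ by interpolation, while for $\beta\in(0,1)$ I would obtain it exactly as in Proposition \ref{propositionmoment}: apply H\"older's inequality with exponents $2/\beta$ and $2/(2-\beta)$ to the factorisation $|G|^{\beta}=(|G|e^{c\|\cdot\|})^{\beta}\,e^{-c\beta\|\cdot\|}$, using $e^{c\|\cdot\|}G\in L^2$ together with the integrability of $e^{-(2c\beta/(2-\beta))\|\cdot\|}$ over $\bR^d$. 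Bounding the inner integral crudely by $\|G\|_{L^{\beta}}^{\beta}$ then gives the estimate $\le \|G\|_{L^{\beta}}^{\beta}\int_{|r|>1}|r|^{\beta}\,\nu(dr)$, which is finite because $\dot{L}$ has existing $\beta$-moment.

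For the region $|r|\le 1$ I would exploit $\beta\le 2$: on the set $\{|G(x-u)|>1/|r|\}$ the base $|rG(x-u)|$ exceeds $1$, so $|rG(x-u)|^{\beta}\le|rG(x-u)|^2$, and hence
\[
\int_{|r|\le 1}|r|^{\beta}\int_{\{|G(x-u)|>1/|r|\}}|G(x-u)|^{\beta}\,\lambda^d(du)\,\nu(dr)\le\|G\|_{L^2}^2\int_{|r|\le 1}|r|^2\,\nu(dr),
\]
which is finite since $G\in L^2$ and $\nu$ is a L\'{e}vy measure. Combining the two regions yields $\int_{|z|>1}|z|^{\beta}\nu_{X_x}(dz)<\infty$, hence $\bE|X_x|^{\beta}<\infty$ for every $x\in\bR^d$.

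I expect the only genuinely delicate step to be the case $\beta\in(0,1)$, where the naive bound $G\in L^{\beta}$ fails without using decay; here the exponential decay $e^{c\|\cdot\|}G\in L^2$ supplied by the Paley--Wiener argument in Theorem \ref{cor1} is essential, and it is precisely the ingredient that makes the H\"older estimate go through. Everything else is a routine application of the Rajput--Rosinski integrability criterion together with the elementary inequality $t^{\beta}\le t^2$ for $t\ge 1$ and $\beta\le 2$.
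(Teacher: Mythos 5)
Your proof is correct and follows essentially the same route as the paper's: the same Rajput--Rosinski computation of the L\'{e}vy measure of $X_x$, the same split into $|r|\le 1$ (handled via $|rG|^{\beta}\le|rG|^{2}$ and $G\in L^{2}$) and $|r|>1$ (handled via $G\in L^{\beta}$, which the paper also deduces from $e^{c\|\cdot\|}G\in L^{2}$). The only difference is that you spell out the H\"older argument for $G\in L^{\beta}$ when $\beta<1$, which the paper leaves implicit.
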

\begin{proof}
Let $G=\mathcal{F}^{-1}\frac{q(i\cdot)}{p(i\cdot)}$ and denote the L\'{e}vy measure of $X_x=\int\limits_{\bR^d}G(x-t)dL(t)$ by $\nu_G$. Then by [\ref{Rajput}, Theorem 2.7],
\begin{align*}
\int\limits_{|z|>1}|z|^\beta \nu_G(dz)&= \int\limits_{\bR}|r|^\beta \int\limits_{|G(x)|>\frac{1}{|r|}}|G(x)|^{\beta} \lambda^d(dx)\nu(dr)\\
&=\int\limits_{|r|\le 1}|r|^\beta \int\limits_{|G(x)|>\frac{1}{|r|}}|G(x)|^{\beta} \lambda^d(dx)\nu(dr)+ \int\limits_{|r|>1}|r|^\beta \int\limits_{|G(x)|>\frac{1}{|r|}}|G(x)|^{\beta} \lambda^d(dx)\nu(dr)\\
&\le \int\limits_{|r|\le 1}|r|^2 \int\limits_{|G(x)|>\frac{1}{|r|}}|G(x)|^{2} \lambda^d(dx)\nu(dr)+ \int\limits_{|r|>1}|r|^\beta \int\limits_{|G(x)|>\frac{1}{|r|}}|G(x)|^{\beta} \lambda^d(dx)\nu(dr)\\
&=I_1+I_2.
\end{align*}
$I_1$ is clearly finite, and $I_2$ is finite since $e^{c\|\cdot\|}G\in L^2(\bR^d)$ for some $c>0$ (see the proof of Theorem \ref{cor1}) and hence $G\in L^\beta (\bR^d)$.
\end{proof}
\begin{remark}
The $\beta$ considered in Proposition \ref{propmoment} has to be smaller or equal than $2$, as otherwise there may exist some $\beta$ for which the Proposition does not hold. Look for example at the fundamental solution of the partial differential operator $\lambda-\Delta$ for some $\lambda>0$ in dimension $3$, which is given by $c\frac{\exp(-\sqrt{\kappa}\|x\|)}{\|x\|}$ with $c$ a constant. The fundamental solution does not live in $L^{3}_{loc}(\bR^3)$, see [\ref{Hofer}, Section 2.1, Equation (21)], which implies that $\bE |X_x|^3=\infty$ for all $x\in\bR^3$.
\end{remark}
As a corollary we get the following easy result.
\begin{corollary}
Let the L\'{e}vy basis $L$ have existing second moment $\sigma^2$ (i.e., $\bE(L([0,1]^d)^2)=\sigma^2$) with vanishing first moment. Then, under the assumptions of Theorem \ref{cor1}, the spectral density of the mild solution $X$ of a CARMA(p,q)-SPDE with polynomials $p$ and $q$ is given by
\begin{align}\label{eq10}
f(\xi)=\sigma^2\left|\frac{q(i\xi)}{p(i\xi)}\right|^2.
\end{align}
\end{corollary}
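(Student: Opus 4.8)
The plan is to compute the autocovariance function of the mild solution $X$ and then identify its spectral density as a Fourier transform. Under the assumptions of Theorem \ref{cor1} the kernel $G=\mathcal{F}^{-1}(q(i\cdot)/p(i\cdot))$ lies in $L^1(\bR^d)\cap L^2(\bR^d)$ and is real-valued (both established in the proof of Theorem \ref{cor1}), and $L$ has finite second moment with vanishing first moment, so $X_t=\int_{\bR^d}G(t-x)\,dL(x)$ is a well-defined, mean-zero, second-order stationary random field.

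First I would record the $L^2$-isometry for integration against $L$. Since $L$ has vanishing first moment and $\bE(L([0,1]^d)^2)=\sigma^2$, the representation of the second moment via the characteristic triplet (cf. [\ref{Rajput}, Theorem 2.7]) gives
\begin{align*}
\cov\left(\int_{\bR^d} f\,dL,\int_{\bR^d} g\,dL\right)=\sigma^2\int_{\bR^d} f(x)g(x)\,\lambda^d(dx)
\end{align*}
for $f,g\in L^2(\bR^d)$. Applying this with $f=G(t+h-\cdot)$ and $g=G(t-\cdot)$ and substituting yields the autocovariance
\begin{align*}
\gamma(h):=\cov(X_{t+h},X_t)=\sigma^2\int_{\bR^d} G(h+y)G(y)\,\lambda^d(dy)=\sigma^2\,(G\ast G^-)(h),
\end{align*}
where $G^-(x):=G(-x)$; this is independent of $t$, reconfirming stationarity.

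Next I would take Fourier transforms. Because $G\in L^1(\bR^d)\cap L^2(\bR^d)$, the correlation $G\ast G^-$ lies in $L^1(\bR^d)$ and the convolution theorem gives $\mathcal{F}(G\ast G^-)=\mathcal{F}G\cdot\mathcal{F}G^-$. Since $G$ is real-valued, $\mathcal{F}G^-(\xi)=\overline{\mathcal{F}G(\xi)}$, and by the definition of $G$ one has $\mathcal{F}G(\xi)=q(i\xi)/p(i\xi)$. Hence
\begin{align*}
\mathcal{F}\gamma(\xi)=\sigma^2\,\frac{q(i\xi)}{p(i\xi)}\,\overline{\frac{q(i\xi)}{p(i\xi)}}=\sigma^2\left|\frac{q(i\xi)}{p(i\xi)}\right|^2,
\end{align*}
and identifying the spectral density of $X$ with the Fourier transform of its autocovariance gives (\ref{eq10}).

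I do not expect a serious obstacle here, since this is the announced \emph{easy} corollary. The one point requiring care is the normalization convention linking the spectral density to $\mathcal{F}\gamma$: depending on where the factor $(2\pi)^d$ is placed in $\mathcal{F}$ and in the spectral representation $\gamma(h)=\tfrac{1}{(2\pi)^d}\int_{\bR^d} e^{i\langle\xi,h\rangle}f(\xi)\,\lambda^d(d\xi)$, the leading constant can shift, so one must fix the convention consistently with the paper's Fourier transform to obtain precisely the stated formula. A second, minor point is the justification of the isometry for $L$ with vanishing first moment, which is immediate from the characteristic-triplet computation of the second moment already used repeatedly above.
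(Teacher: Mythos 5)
Your proposal is correct and follows essentially the same route as the paper: compute the autocovariance $\sigma^2\int_{\bR^d}G(x)G(x-y)\,\lambda^d(dx)$ via the It\^{o}-isometry and then pass to Fourier transforms to read off the spectral density. The only difference is that you spell out the convolution-theorem step and the conjugation identity $\mathcal{F}G^-=\overline{\mathcal{F}G}$ explicitly, whereas the paper delegates this to the reference of Brockwell and Matsuda.
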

\begin{proof}
It is clear that $X_x$ has existing second moment and vanishing first moment. Moreover, we see from the It$\hat{\textrm{o}}$-isometry that
\begin{align*}
\bE X_0X_y=\sigma^2 \int\limits_{\bR^d} G(x)G(x-y)\lambda^d(dx).
\end{align*}
As $G$ is the inverse Fourier transform of $\frac{q(i\xi)}{p(i\xi)}$ we conclude as in [\ref{Brockwell}, Theorem 2, p. 841] that the spectral density is given by (\ref{eq10}).
\end{proof}
\section{CARMA random fields in the sense of Brockwell and Matsuda}\label{section8}
We will now analyze the CARMA random fields in the sense of Brockwell and Matsuda defined in [\ref{Brockwell}] and show that the corresponding random field defines a mild solution of a fractional stochastic partial differential equation. In our setting we find for odd dimensions the corresponding CARMA generalized processes with respect to a SPDE of type $(\ref{spde})$. A CARMA random field in the sense of Brockwell and Matsuda is defined as follows: Let $0\le q<p$, $a_{*}(z)=z^p+a_1z^{p-1}+\dotso+a_p=\prod_{i=1}^p(z-\lambda_i)$ be a polynomial with real coefficients and distinct roots $\lambda_i$ with strictly negative real parts and $b_{*}(z)=b_0+b_1z+\dotso+b_{q-1}z^{q-1}+z^q=\prod_{i=1}^q(z-\kappa_i)$ also be a polynomial with real coefficients. Assume that $\lambda_i\neq \kappa_j$ for all $i$ and $j$. Define the functions
\begin{align*}
a(z)=\prod\limits_{i=1}^p (z^2-\lambda^2_i)\textrm{ and }b(z)=\prod\limits_{i=1}^q (z^2-\kappa_i^2).
\end{align*}
Let $L$ be a L\'{e}vy basis in $\bR^d$ with finite second moment. Then the isotropic CARMA$(p,q)$ field driven by $L$ (in the sense of Brockwell and Matsuda) is given by
\begin{align}\label{CARMA}
X_t=\int\limits_{\bR^d} \sum\limits_{i=1}^p\frac{b(\lambda_i)}{a'(\lambda_i)}e^{\lambda_i ||t-u||}\,dL(u)
\end{align}
for every $t\in\bR^d$. Here, $a'$ denotes the derivative of the polynomial $a$. For a more detailed introduction see [\ref{Brockwell}, Definition 3.1, p. 837]. 
\begin{proposition}
Let $X=(X_t)_{t\in\bR^d}$ be defined by $(\ref{CARMA})$ and $d$ be odd. Then $X$ is the mild solution of the SPDE
 \begin{align}\label{spdebrockwell}
\prod_{i=1}^p a'(\lambda_i)(-\Delta+\lambda_i^2)^{\frac{d+1}{2}}X=c_d\sum\limits_{i=1}^p 2\lambda_ib(\lambda_i)\prod\limits_{j=1,j\neq i}^pa'(\lambda_j)(-\Delta+\lambda_j^2)^{\frac{d+1}{2}}dL
\end{align}
for some constant $c_d$ depending on the dimension $d$. 
\end{proposition}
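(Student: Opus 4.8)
The plan is to realise the Brockwell--Matsuda kernel as a fundamental solution and to verify everything on the Fourier side. Write $P(D)$ and $Q(D)$ for the differential operators on the left- and right-hand side of (\ref{spdebrockwell}), and set
\[
G(t):=\sum_{i=1}^p\frac{b(\lambda_i)}{a'(\lambda_i)}\,e^{\lambda_i\|t\|},
\]
so that $X_t=\int_{\bR^d}G(t-u)\,dL(u)$ is precisely (\ref{CARMA}). Since $\Re\lambda_i<0$, the function $G$ is continuous, bounded and decays exponentially, hence $G\in L^1(\bR^d)\cap L^2(\bR^d)$, and (the $\lambda_i$ occurring in conjugate pairs) $G$ is real-valued; together with the finite second moment of $L$ this guarantees, as in the proof of Theorem \ref{cor1} and Example \ref{ichhabekeineahnung}, that the integral exists and that $X$ is a bona fide random field. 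By the definition of a mild solution it therefore remains to show that $G$ is a locally integrable fundamental solution of $P(D)u=Q(D)\delta_0$, and since $G\in L^1\cap L^2$ this is equivalent to the symbol identity $\widehat P\,\widehat G=\widehat Q$ of tempered distributions.

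Here the oddness of $d$ is essential: $(d+1)/2\in\bN$, so $(-\Delta+\lambda_i^2)^{(d+1)/2}$ is a genuine differential operator with the integer-power Fourier symbol $(\|\xi\|^2+\lambda_i^2)^{(d+1)/2}$, whence
\[
\widehat P(\xi)=\Big(\prod_{i=1}^p a'(\lambda_i)\Big)\prod_{i=1}^p(\|\xi\|^2+\lambda_i^2)^{(d+1)/2},\qquad
\frac{\widehat Q(\xi)}{\widehat P(\xi)}=2c_d\sum_{i=1}^p\frac{\lambda_i b(\lambda_i)}{a'(\lambda_i)(\|\xi\|^2+\lambda_i^2)^{(d+1)/2}}.
\]
The key computation is then the radial Fourier transform
\[
\mathcal{F}\big[e^{-a\|\cdot\|}\big](\xi)=\frac{\tilde c_d\,a}{(a^2+\|\xi\|^2)^{(d+1)/2}},\qquad \tilde c_d=2^d\pi^{(d-1)/2}\Gamma\big(\tfrac{d+1}{2}\big),
\]
which I would first establish for real $a>0$ (reducing to a one-dimensional integral after passing to polar coordinates) and then continue analytically to all $a$ with $\Re a>0$, both sides being holomorphic there. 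Applying this with $a=-\lambda_i$ and summing over $i$ gives $\widehat G(\xi)=-\tilde c_d\sum_i \lambda_i b(\lambda_i)\big(a'(\lambda_i)(\|\xi\|^2+\lambda_i^2)^{(d+1)/2}\big)^{-1}$, so the choice $c_d:=-\tilde c_d/2$ yields $\widehat P\,\widehat G=\widehat Q$. Taking inverse Fourier transforms gives $P(D)G=Q(D)\delta_0$, i.e.\ $G$ is the required fundamental solution, and hence $X$ is the mild solution of (\ref{spdebrockwell}).

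I expect the main obstacle to be the radial Fourier transform identity together with its analytic continuation to complex $\lambda_i$: pinning down the exact dimensional constant and justifying that the branch ambiguity in $(a^2+\|\xi\|^2)^{(d+1)/2}$ disappears exactly when $(d+1)/2$ is an integer. This is also the conceptual heart of the statement, since for even $d$ the exponent $(d+1)/2$ is a genuine half-integer and $(\|\xi\|^2+\lambda_i^2)^{(d+1)/2}$ is not polynomial, forcing the fractional operators and hence the fractional SPDE announced in the introduction; only for odd $d$ does one recover the classical differential operators appearing in (\ref{spdebrockwell}). Once the transform is in hand, determining $c_d$ and checking $\widehat P\,\widehat G=\widehat Q$ is routine algebraic bookkeeping, and the well-definedness of the integral follows exactly as in Proposition \ref{proposition2} and Theorem \ref{cor1}.
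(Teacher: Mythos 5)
Your proof is correct and takes essentially the same approach as the paper: both reduce the claim to verifying, on the Fourier side, that the transform of the Brockwell--Matsuda kernel $G(t)=\sum_{i=1}^p \frac{b(\lambda_i)}{a'(\lambda_i)}e^{\lambda_i\|t\|}$ equals the ratio $\widehat Q/\widehat P$ of the symbols of the two differential operators, which is exactly the defining property of the mild solution. The only difference is that the paper quotes the formula $\widehat G(\xi)=c_d\sum_{i=1}^p 2\lambda_i b(\lambda_i)\bigl(a'(\lambda_i)(\|\xi\|^2+\lambda_i^2)^{(d+1)/2}\bigr)^{-1}$ directly from Brockwell and Matsuda's Theorem 2, whereas you rederive it from the classical radial transform of $e^{-a\|\cdot\|}$ together with analytic continuation in $a$; this is a self-contained substitute for the citation, not a different argument.
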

\begin{proof}
We know from [\ref{Brockwell}, Theorem 2, p.841] that the Fourier transform of the isotropic CARMA kernel is given by
\begin{align*}
c_d\sum\limits_{i=1}^p\frac{2\lambda_ib(\lambda_i)}{a'(\lambda_i)(\|z\|^2+\lambda_i^2)^{\frac{d+1}{2}}}=c_d\frac{\sum\limits_{i=1}^p 2\lambda_ib(\lambda_i)\prod\limits_{j=1,j\neq i}^pa'(\lambda_j)(\|z\|^2+\lambda_j^2)^{\frac{d+1}{2}}}{\prod_{i=1}^p a'(\lambda_i)(\|z\|^2+\lambda_i^2)^{\frac{d+1}{2}}}, \,z\in\bR^d,
\end{align*}
for some constant $c_d$ dependend on the dimension $d$. We conclude that $S_d$ is the mild solution of the SPDE
\begin{align*}
\prod_{i=1}^p a'(\lambda_i)(-\Delta+\lambda_i^2)^{\frac{d+1}{2}}X=c_d\sum\limits_{i=1}^p 2\lambda_ib(\lambda_i)\prod\limits_{j=1,j\neq i}^pa'(\lambda_j)(-\Delta+\lambda_j^2)^{\frac{d+1}{2}}\dot{L}
\end{align*}
by comparing our mild solution to the definition in (\ref{mild}).
%In odd dimension the equation $(\ref{spdebrockwell})$ becomes a SPDE which can be written in the form $(\ref{spde})$. Moreover, the polynomials satisfy the Assumptions \ref{ass1}, we conclude that there exists also a generalized process $s$ which satisfies the equation (\ref{spdebrockwell}).
\end{proof}
For even $d$ we see that $\prod_{j=1}^p(-\Delta+\lambda_i^2)^{\frac{d+1}{2}}$ defines a fractional Laplace operator, which is defined by 
\begin{align}\label{eq999}
\prod_{j=1}^p(-\Delta+\lambda_i^2)^{\frac{d+1}{2}}\varphi:=\mathcal{F}^{-1}\prod_{j=1}^p(\sum\limits_{m=1}^d z_m^2+\lambda_j^2)^{\frac{d+1}{2}}\mathcal{F}\varphi.
\end{align} 
A fundamental solution $G$ of $Au=B\delta_0$, where $A$ and $B$ are fractional operators defined by (\ref{eq999}), is defined by $A G\ast \varphi=B\varphi$ for all $\varphi\in\mathcal{D}(\bR^d)$. Allowing this larger class of solutions we obtain the following.
\begin{proposition}
Let $X=(X_t)_{t\in\bR^d}$ be defined by $(\ref{CARMA})$. Then $X$ is the mild solution of the (fractional) SPDE
 \begin{align}\label{spdebrockwell}
\prod_{i=1}^p a'(\lambda_i)(-\Delta+\lambda_i^2)^{\frac{d+1}{2}}X=c_d\sum\limits_{i=1}^p 2\lambda_ib(\lambda_i)\prod\limits_{j=1,j\neq i}^pa'(\lambda_j)(-\Delta+\lambda_j^2)^{\frac{d+1}{2}}\dot{L}
\end{align}
for some constant $c_d$ dependend on the dimension $d$. 
\end{proposition}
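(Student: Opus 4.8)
The plan is to mirror the proof just given for odd $d$, the only genuinely new ingredient being the interpretation of $(-\Delta+\lambda_i^2)^{\frac{d+1}{2}}$ as a fractional operator via the Fourier-multiplier definition (\ref{eq999}). The key observation is that Brockwell and Matsuda's computation of the Fourier transform of the isotropic CARMA kernel ([\ref{Brockwell}, Theorem 2, p. 841]) is valid in every dimension $d$ and uses nothing about parity; what changes for even $d$ is merely that the exponent $\frac{d+1}{2}$ is a half-integer, so the symbol $(\|z\|^2+\lambda_j^2)^{\frac{d+1}{2}}$ is no longer a polynomial and the associated operators are genuinely fractional. Consequently the polynomial framework of Section \ref{sectionCRF} cannot be invoked verbatim; instead one must work with the extended notion of fractional operator, fractional fundamental solution, and corresponding mild solution introduced immediately above the proposition.

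First I would recall from [\ref{Brockwell}, Theorem 2] that the Fourier transform of the kernel $G(x)=\sum_{i=1}^p \frac{b(\lambda_i)}{a'(\lambda_i)} e^{\lambda_i\|x\|}$ equals
\begin{align*}
c_d\,\frac{\sum_{i=1}^p 2\lambda_i b(\lambda_i)\prod_{j=1,j\neq i}^p a'(\lambda_j)(\|z\|^2+\lambda_j^2)^{\frac{d+1}{2}}}{\prod_{i=1}^p a'(\lambda_i)(\|z\|^2+\lambda_i^2)^{\frac{d+1}{2}}}.
\end{align*}
I would then identify the denominator as the symbol of the fractional operator $A:=\prod_{i=1}^p a'(\lambda_i)(-\Delta+\lambda_i^2)^{\frac{d+1}{2}}$ and the numerator as the symbol of the operator $B$ appearing on the right-hand side of the claimed SPDE, both understood in the sense of (\ref{eq999}). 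Since $G$ is by construction the inverse Fourier transform of the quotient of these two symbols, passing to Fourier transforms shows that $A(G\ast\varphi)=B\varphi$ for every $\varphi\in\mathcal{D}(\bR^d)$, i.e. $G$ is a fundamental solution of $Au=B\delta_0$ in the fractional sense defined just above the proposition. By the definition of mild solution it then follows, exactly as in the odd case by comparison with (\ref{mild}), that $X_t=\int_{\bR^d} G(t-u)\,dL(u)$ solves $AX=B\dot L$.

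The only remaining point is the existence of the integral defining $X_t$, but this is already part of the Brockwell--Matsuda definition; alternatively it follows from Proposition \ref{proposition2}(i), since the exponential decay of each summand $e^{\lambda_i\|x\|}$ (all $\Re\lambda_i<0$) gives $G\in L^1(\bR^d)\cap L^2(\bR^d)$, and $L$ has finite second moment. I expect the only (mild) obstacle to be bookkeeping: checking that the symbol identity reproduces the claimed numerator and denominator exactly, including the factors $2\lambda_i b(\lambda_i)\prod_{j\neq i}a'(\lambda_j)$ and the dimensional constant $c_d$. Once the Brockwell--Matsuda Fourier formula is in hand, this verification is purely algebraic and carries over unchanged from the odd-dimensional case, the substantive difference being conceptual rather than computational, namely that all operators must now be read as fractional Fourier multipliers.
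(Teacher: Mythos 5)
Your proposal is correct and follows essentially the same route as the paper, whose own proof of this proposition is simply the remark that it ``follows the same arguments as above,'' i.e.\ from the odd-dimensional case: invoke the Brockwell--Matsuda Fourier transform of the isotropic kernel, read the denominator and numerator as the symbols of the fractional operators defined in (\ref{eq999}), and compare with the definition of a mild solution in (\ref{mild}). Your additional observations --- that the Fourier formula is parity-independent and that integrability of the kernel is already guaranteed --- are accurate and merely make explicit what the paper leaves implicit.
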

\begin{proof}
Follows the same arguments as above.
\end{proof}
\section*{Acknowledgement:} Partial support by DFG grant LI 1026/6-1 is gratefully acknowledged. The author would like to thank Alexander Lindner for his patience and support and for many interesting and fruitful discussions. Moreover, the author would like to thank Claudia Kl\"uppelberg and Viet Son Pham for a valuable discussion on CARMA random fields. Last but not least the author would like to thank Paul Doukhan for pointing out the reference [\ref{S}].

\vspace{1cm}
David Berger\\
 Ulm University, Institute of Mathematical Finance, Helmholtzstra{\ss}e 18, 89081 Ulm,
Germany\\
email: david.berger@uni-ulm.de
\end{document}